\RequirePackage{ifpdf}
\ifpdf
\documentclass[a4paper,11pt,twoside]{article}
\usepackage{microtype}
\else
\documentclass[a4paper,11pt,twoside,dvips]{article}
\fi

\usepackage[scale=0.85,includeheadfoot]{geometry}
\usepackage[medium]{titlesec}

\usepackage[utf8x]{inputenc}
\usepackage[english]{babel}

\usepackage[T1]{fontenc}

\usepackage{amsmath,amsfonts,amsthm, mathtools}
% \mathtoolsset{showonlyrefs,showmanualtags}
\usepackage{subeqnarray}
\usepackage{latexsym}

\usepackage{mathptmx}
\usepackage{authblk}

\usepackage{parskip}
\usepackage{fancyhdr}
\pagestyle{fancy}

\usepackage{graphicx}
\usepackage{hyperref}

\usepackage{tikz}

\graphicspath{{pics/}}

\usepackage[vlined,lined,algonl,algo2e,boxed]{algorithm2e}

%%%%%%%%%%%%%%%%%%%%%%%%%%%%%%
%LAYOUT
%%%%%%%%%%%%%%%%%%%%%%%%%%%%%%
%general layout
\geometry{left=1in,top=0.8in,right=1in,bottom=0.8in}
\setlength{\headsep}{10pt}
\usepackage{fancyhdr}
\pagestyle{fancy}
\fancyhead{}
% \renewcommand\headrulewidth{.1pt}
% \fancyhead[LO,RE]{\footnotesize\rightmark}
% \fancyhead[LE,RO]{\footnotesize\leftmark} 
\fancyhead[RO]{\footnotesize\rightmark}
\fancyhead[LE]{\footnotesize\leftmark} 
\fancyfoot{}
\fancyfoot[LE,RO]{\thepage}

\numberwithin{equation}{section}
\numberwithin{figure}{section}
\numberwithin{table}{section}

% define some colors
\definecolor{MyRed}{rgb}{0.6, 0,0}
\definecolor{MyGreen}{rgb}{0.0, 0.5, 0.0}
\definecolor{MyBlue}{rgb}{0.0,0.0,0.5}
\definecolor{MyGray}{rgb}{0.9,0.9,0.9}

%define font in captions
\newcommand{\captionfonts}{\small}
\makeatletter  % Allow the use of @ in command names
\long\def\@makecaption#1#2{%
  \vskip\abovecaptionskip
  \sbox\@tempboxa{{\captionfonts #1: #2}}%
  \ifdim \wd\@tempboxa >\hsize
    {\captionfonts #1: #2\par}
  \else
    \hbox to\hsize{\hfil\box\@tempboxa\hfil}%
  \fi
  \vskip\belowcaptionskip}
\makeatother   % Cancel the effect of \makeatletter

%preventing pages with only one figure on it

\setcounter{totalnumber}{50}
\setcounter{topnumber}{50}
\setcounter{bottomnumber}{50}

\newtheorem{proposition}{Proposition}
 
\newtheorem{theorem}[proposition]{Theorem}
\newtheorem{corollary}[proposition]{Corollary} 
\newtheorem{lemma}[proposition]{Lemma}

% ----- commands ---------

% computer algebra stuff
\newcommand{\R}{{\mathbb R}}

\newcommand{\N}{{\mathbb N}}

 % univariate factor ring of degree d
\newcommand{\T}{T}

\newcommand{\eqE}{{\stackrel{E}{=}}}
\newcommand{\eqD}{{\stackrel{D}{=}}}

\newcommand{\eqDE}{{\stackrel{D+E}{=}}}

% linear algebra stuff
\newcommand{\dd}{{\rm d}}

\newcommand{\Id}{  1 \!\!\! \mathrm I}

\newcommand{\qr}{{\rm qr \,}}

\newcommand{\eigh}{{\rm eigh \,}}
\newcommand{\tr}{\mathop{\rm tr \,}}
\newcommand{\diag}{\mathop{\rm diag}}

\newcommand{\rank}{{\mathrm{rank}\,}}

\newcommand{\Nb}{N_{\mathrm{b}}}
\newcommand{\nb}{{n_{\mathrm{b}}}}
\newcommand{\Nr}{{N_{\mathrm{r}}}}

\renewcommand{\ker}{{\rm ker}\,}
 %eingefuegt  am 8.6.10 rl
%\newcommand{\dim}{{\rm dim}\,}

%\newcommand{\det}{{\rm det}\,}
%\newcommand{\exp}{{\rm exp}\,}
\newcommand{\pPi}{\mathnormal{\Pi}}

% differential geometry stuff

% AD stuff

% \newcommand{\div}{\mathrm{div}\,}

% DAE stuff

% optimization stuff

% statistical stuff

% abbreviations
\newcommand{\Eqn}[1]{Eqn. (\ref{#1})}
\newcommand{\Fig}[1]{Fig. (\ref{#1})}

% colors
\newcommand{\textred}[1]{#1}

\newcommand{\textgreen}[1]{#1}

% chemical engineering

% ODOE description stuff

\renewcommand{\nu}{ {n_u}}
\newcommand{\Np}{ {N_p}}

\newcommand{\Nq}{ {N_q}}

\newcommand{\Nm}{ {N_m}}

\begin{document}

\title{Higher-order derivatives of the QR and of the real symmetric eigenvalue decomposition in forward and reverse mode algorithmic differentiation}
\author[1]{Sebastian F. Walter\thanks{sebastian.walter@gmail.com}}
\author[1]{Lutz Lehmann}
\author[1]{ Ren\'e Lamour}
\affil[1]{Department of Mathematics, Faculty of Mathematics and Natural Sciences II,\\
Humboldt-Universit{\"a}t zu Berlin, Rudower Chaussee 25, Adlershof, 12489 Berlin,\\
mail address: Unter den Linden 6, 10099 Berlin, Germany
\\\vspace{6pt} }

\maketitle

\begin{abstract}
We address the task of higher-order derivative evaluation of computer programs that contain QR decompositions and real symmetric eigenvalue decompositions. The approach is a combination of univariate Taylor polynomial arithmetic and matrix calculus in the (combined) forward/reverse mode of Algorithmic Differentiation (AD). Explicit algorithms are derived and presented in an accessible form. The approach is illustrated via examples.
\end{abstract}

{\scriptsize
\textbf{Keywords:} univariate Taylor polynomial arithmetic; higher-order derivatives; QR decomposition; real symmetric eigenvalue decomposition; algorithmic differentiation; automatic differentiation

\textbf{Classification:} 65D25; 12E05; 58C15; 65F15; 65F25
}

\section{Introduction and related work}
This paper is concerned with the efficient evaluation of higher-order derivatives of functions $F: \R^N \rightarrow \R^M$ which are implemented as computer programs that contain numerical linear algebra functions like the QR or the real symmetric eigenvalue decomposition. 

Traditionally, Algorithmic Differentiation (AD) tools like ADOL-C \cite{Griewank1999ACA} or CppAD \cite{Bell:2010} regard the functions defined in the C header file math.h as elementary functions. In the forward mode of AD, their approach to compute higher-order derivatives is to generalize from real arithmetic to univariate Taylor polynomial (UTP) arithmetic  \cite{Griewank1999ACA, Griewank2000EHD, Griewank2008EDP}. For the reverse mode of AD, the program evaluation is traced and stored in a computational graph or on a sequential tape. During the so-called reverse sweep the stored intermediate values are retrieved and used to compute derivatives (c.f. Section \ref{sec:review_ad}).

As explained in Section \ref{sec:computational_model}, the functions in math.h suffice since all computable functions are a concatenation of these functions. However, working only at the expression level has also disadvantages since no global knowledge of the function's structure can be used. A particularly important class of algorithms in science and engineering are numerical linear algebra (NLA) functions. Though NLA functions are typically locally smooth, their implementations often contain non-differentiable operations and program branches. If no special care is taken, this may result in incorrect computations of derivatives. Also, many NLA functions on $\R^{N \times N}$ matrices require $\mathcal O(N^3)$ arithmetic operations. Since during the reverse mode intermediate results are required, this would yield an $\mathcal O(N^3)$ memory requirement. Though it may be possible to adapt codes to yield reduced memory requirements, as for instance reported for the LU decomposition \cite{Griewank2003AMV}, in practice it can be a cumbersome and error-prone process. Also one would like to reuse existing, high-performance implementations of NLA algorithms. Adding the NLA functions to the list of elementary functions circumvents this problem. This has been realized before \cite{Bucker2002HAf, Bischof1996HAt} and also UTP algorithms for some NLA functions (e.g. the solution of linear equations) have been implemented in software \cite{ETP03}.

The contribution of this paper is to provide explicit algorithms for UTP arithmetic applied to the QR decomposition and the real symmetric eigenvalue decomposition. Note that our approach to the real symmetric eigenvalue decomposition is similar to \cite{andrew:98, vanderaa:06} but our algorithmic result differs. In addition, we also treat the reverse mode of AD.

The document is structured as follows: In Section \ref{sec:motivating_example} we give two application examples for the algorithms presented in this document, followed by a brief review of the underlying computational model in Section \ref{sec:computational_model}. We shortly describe the basics of AD in Section \ref{sec:review_ad} where we make use of the results from \ref{sec:computational_model}. In Section \ref{sec:utpm} we describe the general approach of NLA functions. After that, we apply the results from Section \ref{sec:review_ad} to find extended functions for the QR and eigenvalue decomposition in Section \ref{sec:qr} and \ref{sec:eigh} and also provide pullback algorithms that are necessary in in the reverse mode of AD. Finally, we present some numerical results in Section \ref{sec:numerical_results}.

\section{Application examples for the proposed algorithms} \label{sec:motivating_example}
The purpose of this section is to show two application examples where higher-order derivatives of computer programs that contain the QR and the real symmetric eigenvalue decomposition are necessary. 

\subsection{Optimum experimental design}
The goal in optimum experimental design (OED) is to minimize some cost function representing the size of the confidence region of parameters of interest. We consider here a popular formulation  where the objective function $\Phi(C) \in \R$ depends on the covariance matrix $C \in \R^{\Np \times \Np}$ of a constrained parameter estimation problem, where the covariance matrix is computed by
\begin{eqnarray}
C &=& (\Id, 0)
\begin{pmatrix}
J_1^T J_1 & J_2^T \\
J_2 & 0 \\
\end{pmatrix}^{-1}
\begin{pmatrix}
\Id\\
0
\end{pmatrix} \;, \label{eqn:covariance_matrix}
\end{eqnarray}
and we assume that $J_1 \equiv J_1(q) \in \R^{\Nm \times \Np}$, $J_2 \equiv J_2(q) \in \R^{\Nr \times \Np}$, $\Id \in \R^{N_p \times N_p}$ and $q \in \R^{\Nq}$. The notation is motivated as follows: $p \in \R^\Np$ are model (pseudo-)parameters, $q \in \R^{\Nq}$ are control variables and $J_1$ and $J_2$ are Jacobians  of the residuals resp. of the constraint function with respect to the parameters $p$. Typical choices for cost function $\Phi$ are the trace, the determinant or the maximum eigenvalue of the covariance matrix $C$. Though \Eqn{eqn:covariance_matrix} correctly describes the covariance matrix $C$, the actual algorithmic implementation is often a code like
\begin{eqnarray}
C &=& Q_2^T \left( Q_2 J_1^T J_1 Q_2^T \right)^{-1} Q_2 \label{eqn:cov_qr} \;,
\end{eqnarray}
where $Q_2$ results from a QR-like decomposition of $J_2$, i.e.
$J_2^T = (Q_1^T, Q_2^T)
(L, 0)^T
$. The matrices $J_1$ and $J_2$ are assumed to satisfy the constraint qualification $\rank(J_2)) = \Nr$ and the condition $\rank (J) = \Np$, where $J = (J_1^T, J_2^T)^T$. The matrix $Q_2$ spans the nullspace of $J_2$. For a detailed discussion we refer to K\"orkel \cite{Koerkel:02} and to Bock and Kostina \cite{BKKS07}.

Newton-type optimization algorithms require at least the gradient $\nabla_q \Phi(q)$ of the objective function $\Phi$. To obtain good convergence near the local minimizer, it is often advantageous if exact second-order derivatives are available.
Since the number of controls $\Nq$ can be large, one would like to have the possibility to compute these derivatives in the reverse mode of AD. Robust objective functions are often formulated in a way that require third and even higher-order derivatives, so it is necessary to have algorithms that scale easily to arbitrary order. 

Thus, this example requires the differentiation of the nullspace of a matrix, the matrix product, matrix inversion, the QR decomposition and the objective function evaluation, e.g. the eigenvalue decomposition.

\subsection{Index determination of differential algebraic equations}
Many dynamical problems in chemical engineering, rigid body mechanics, circuit simulation and control theory are described by Differential Algebraic Equations (DAEs) of the form
\begin{equation}
0 = f\left( \tfrac{\dd }{\dd x}d(y,x), y, x\right),
\;x\in I=[a,b]\subset\R,
\end{equation}
where $y:\R\to\R^m$ lives in suitable function space, 
$f:\R^n\times \R^m\times I\to\R^m$, $d:\R^m\times I\to\R^n$ 
are sufficiently smooth and typically $n$ is smaller than $m$.

Using higher--order derivatives of the functions in the DAE one can, in general, transform the DAE system into an ODE system of order one. The \emph{differentiation index} is the highest derivative order required in this process, that is, derivatives of up to this order of the original equations are part of any solution of the DAE. The knowledge of the index allows to estimate the difficulty to solve the DAE.

There are many different index definitions. Here we consider the tractability index. To compute it, the DAE is linearized along a given function $\bar y(x)$ as
\[
 \underbrace{
 	\tfrac{\partial}{\partial z}f(\bar w(x), \bar y(x), x)
 }_{
	=A(x)\in \R^{m \times n}
 }
 \tfrac{\dd }{\dd x} 
 \Bigl( 
	\underbrace{
		\tfrac{\partial}{\partial y}d(\bar y(x),x)
	}_{
		=D(x)\in \R^{n \times m}
	}
	z(x)
 \Bigr) +
 \underbrace{
 	\tfrac{\partial}{\partial y}f(\bar w(x), \bar y(x),x)
 }_{
 	=B(x)\in \R^{m \times m}
 }
 z(x) =
 \underbrace{
 	-f\left( \bar w(x),\bar y(x),x\right)
 }_{
 	q(x)
 }
\]
with $\bar w(x)=\frac{\dd}{\dd x}d(\bar y(x), x)$. The coefficient functions $A = A(x),\;D = D(x)$ and $B = B(x)$ give rise to a matrix sequence
\begin{align}
 G_0&=AD,&B_0&=B,\nonumber\\
G_{i+1}&=G_i+B_iQ_i,&
B_{i+1}&=B_iP_i-G_{i+1}D^-\tfrac{\dd}{\dd x}(DP_0\cdots P_{i+1}D^-)DP_0\cdots P_{i},\label{Bip1}
\end{align}
where $Q_i$ describes a projector onto $\ker G_i$, $P_i=I-Q_i$ and $D^-$ is a generalized reflexive inverse of $D$. Now, the tractability index is the smallest number $\mu \in \N $ where $G_{\mu}$ is nonsingular.
The projectors $Q_i$ can be determined mainly by use of a QR decomposition.

A QR decomposition of the potentially singular matrix $G \in  \R^{M \times M}$ with $\rank G = r$ results in 
\[
	G \pPi = Q \begin{pmatrix}
	            R_1 & R_2\\
		    0 & 0
	           \end{pmatrix},
\]
where $\pPi$ describes a column permutation matrix, $Q$ an orthogonal matrix and $R_1 \in \R^{r \times r}$ an 
upper triangular matrix. Then a nullspace projector $Q_G$ onto $\ker G$ is given by
\[
 Q_G=\pPi \begin{pmatrix}
           0 & -R^{-1}_1R_2\\
	    0& I
          \end{pmatrix}\pPi^T.
\]
The computation of $B_{i+1}$ via \eqref{Bip1} needs the differentiation of $DP_0\cdots P_{i+1}D^-$ with respect to $x$.
Thus, higher--order derivatives of a function that contains the QR decomposition are necessary.
For a in-depth discussion of index definition of DAEs see M{\"a}rz \cite{Mae02,Mae03}.

\section{Computational model}\label{sec:computational_model}
We consider functions 
\begin{eqnarray*}
F: \R^N &\rightarrow& \R^M \\
 x & \mapsto&  y = F(x) \;,
\end{eqnarray*} that can be  described by the \emph{three-part form}
\begin{eqnarray*} \label{eqn:seqelem}
v_{n-N} & = & x_n \quad\quad\quad n=1,\dots,N\\
v_{l} & = & \phi_l(v_{j \prec l}) \quad l =1,\dots, L\\
y_{M-m} & = & v_{L-m} \quad\quad m = M-1, \dots, 0 \;,
\end{eqnarray*}
where  $\phi_l \in \{+,-,\cdot, /,\sin, \exp, \dots \}$ are called \emph{elementary functions}, $v_l$ are intermediate values and $v_{i \prec l}$ denote the tuples of input arguments of $\phi_l$.  For instance the function $F:\R^2 \rightarrow \R$, $\textgreen{x} \mapsto \textgreen{y} = F(\textgreen{x}) = \textred{\sin}( \textgreen{x_1} \textred{+} \textred{\cos}(\textgreen{x_2}) \textred{*} \textgreen{x_1})  $ is described by
\begin{center}
\begin{tabular}{|c|l c l c l|}
\hline
independent & \textgreen{$v_{-1}$} &=& \textgreen{$ x_1 $} &=& $  3 $\\
independent & \textgreen{$v_{0}$}  &=& \textgreen{$ x_2 $} &=& $ 7  $\\
\hline
&\textgreen{$v_{1} $} &=& $ \textred{\phi_1}( \textgreen{v_{0}} )$ &=& $\cos(v_{0})$ \\
&\textgreen{$v_{2} $} &=& $ \textred{\phi_2}( \textgreen{v_{1}}, \textgreen{v_{-1}} )$ &=& $ v_1 v_{-1}$ \\
&\textgreen{$v_{3} $} &=& $ \textred{\phi_3}( \textgreen{v_{-1}}, \textgreen{v_{2}} )$ &=& $  v_{-1} + v_{2}$ \\
&\textgreen{$v_{4} $} &=& $ \textred{\phi_4}( \textgreen{v_{3}} )$ &=& $  \sin(v_{3})$ \\
\hline
dependent & \textgreen{$y$} &=& \textgreen{$v_4$} && \\
\hline
\end{tabular}
\end{center}
It shows a sequential representation of the computation. Alternatively, one can describe the function evaluation as composite function
\begin{eqnarray}
F(x) &=& P_y \circ \Phi_L \circ \Phi_{L-1} \circ \dots \circ \Phi_1 \circ P_x^T (x) \;,
\end{eqnarray}
where $\Phi_l: \mathcal H \rightarrow \mathcal H$, $s^{(l-1)} \mapsto s^{(l)} = \Phi_l(s^{(l-1)} )$ are called \emph{elementary transitions} that operate the \emph{state space} $\mathcal H$. 
Each elementary transition can be written as
\begin{equation}
\Phi_l = P_l \circ  \phi_l \circ Q_l + ( \Id - (1 - \sigma_l) P_l \circ P_l^T) \;.
\end{equation}
where the functions $\phi_l: \mathcal D_l \subseteq \mathcal H_l \rightarrow \mathcal H_l \in \{+,-,*,/,\sin,\exp,\dots \}$ are the elementary functions. The $Q_l: \mathcal H \rightarrow \mathcal H_l$ map to the domains of the elementary functions and the $P_l: \mathcal H_l \rightarrow \mathcal H$ map back to the overall state space. The functions $P_x^T$ and $P_y$ are used to map the independent variables $x$ to the state $s^{(0)}$ and $s^{(L)}$ to $y$. The case $\sigma_l = 1$ corresponds to an augmented assignment $s_l = s_l + \phi_l(s_l)$ and $\sigma_l = 0$ to the usual assignment $s_l =  \phi_l(s_l)$.  For our purposes it suffices to consider a real vector space as state space, i.e., the mappings $P_l$ and $Q_l$ can be written as matrices. For a more detailed discussion see Griewank \cite{Griewank2003AMV}.

\section{Algorithmic differentiation}\label{sec:review_ad}
In this section we briefly review some key results from the theory of AD that will be necessary in Section \ref{sec:qr} and \ref{sec:eigh}. For a detailed discussion we refer to the standard reference ``Evaluating Derivatives'' by Griewank and Walther \cite{Griewank2008EDP}. 

\subsection{The forward mode}
One can use univariate Taylor series expansions to compute higher-order (directional) derivatives. The basic observation is that given a smooth curve $x(t) = x_0 + x_1 t$ with $t \in (- \epsilon, \epsilon)$, $\epsilon > 0$, and a smooth function $F$ one obtains a smooth curve $y(t) = F(x(t))$ with the Taylor series expansion
\begin{equation}
y(t) =  \sum_{d=0}^{D-1} y_d t^d + \mathcal O(t^D) = \sum_{d=0}^{D-1} \left. \frac{1}{d!} \frac{\dd^d}{\dd t^d} F(x(t)) \right|_{t=0} t^d + \mathcal O(t^D) \;.  
\end{equation}
By application of the chain rule one can interpet the terms of the expansion. The zeroth derivative is the normal function evaluation $y_0 = F(x_0)$ and the first coefficient $y_1 = \left. \frac{\dd }{\dd t} F(x(t)) \right|_{t=0} = \frac{\dd}{\dd x}F(x_0) \cdot x_1$ is a directional derivative.

In the context of AD it is advantageous to generalize the notion of Taylor series expansions to a purely algebraic task. In other words, for arithmetic with univariate Taylor polynomials (UTP) one extends functions $F:\R^N \rightarrow \R^N$ to functions $E_D(F): \R^N[T]/(T^D) \rightarrow \R^M[T]/(T^D)$. We denote representing elements of the polynomial factor ring $\R^N[T]/(T^D)$ as 
\begin{equation}
[x]_D := [x_1,\dots, x_{D-1}] := \sum_{d=0}^{D-1} x_d T^d \;,
\end{equation}
where $x_d \in \R^N$ is called \emph{Taylor coefficient}. The quantity $T$ is an indeterminate, i.e., a formal variable. The \emph{extended function} $E_D(F)$ is defined by its action
\begin{eqnarray}
[y]_D &=& E_D(F)([x]_D ) = \sum_{d=0}^{D-1} y_d \T^d = \left. \sum_{d=0}^{D-1} \frac{1}{d!} \frac{\dd^d}{\dd t^d} F(\sum_{d=0}^{D-1} x_d t^d) \right|_{t=0} \T^d \;.
\end{eqnarray}
The notation $[x]_D \equiv [x]_{:D-1}$ and $[x]_{d+1:D-1} \equiv [x]_{d+1:} \equiv [x_{d+1}, \dots, x_{D-1}]$ will be useful later on.
One can show that this definition is compatible with the usual polynomial addition and multiplication. Furthermore, any composite function $F(x) = (H \circ G)(x) = H(G(x))$ satisfies 
\begin{eqnarray}
E_D(F) &=& E_D(H) \circ E_D(G) \;.
\end{eqnarray}
I.e., the extension function $E_D$ is a homomorphism which preserves function composition. An immediate consequence is that it is necessary to find algorithms only for the very limited set of elementary functions $\phi \in \{ +, -, *, /, \sin, \cos, \exp, \dots \}$. Explicitly, one performs the program transformation $E_D(F) = E_D(\Phi_L) \circ \dots \circ E_D(\Phi_1) ([x]_D)$. We call the action of computing $[y]_D = E_D(F)([x]_D)$, i.e., the resolution of the symbolic dependence to obtain the numerical value $[y]_D$, the \emph{pushforward} of the function $E_D(F)$.

Many functions are implicitly defined by equations of the type $0 = F(x,y) \in \R^M \;$,
where $x \in \R^N$ are the inputs and $y \in \R^M$ the outputs. 
The idea is to demand that the \emph{defining equations of order $D$}
\begin{equation}
0 \eqD E_D(F)([x]_D, [y]_D)
\end{equation}
 should be satisfied. By $\eqD$ it is meant that $[x] \eqD [y]$ if $x_d = y_d$ for $d = 0,\dots, D-1$. This is also often written either as $[x] = [y] + \mathcal O(T^D)$ or  $[x] = [y] \mod T^D$.
The defining equations lead directly to an algorithmic approach to compute $[y]_D$, the so-called \emph{Newton-Hensel lifting}. In the literature it is often also just called Hensel-lifting or Newton's method \cite{Griewank2008EDP}. Assuming  $[y]_D$ is already known and satisfies $0  \eqD  E_D(F)([x], [y]_D)$, one can lift the computation to a higher degree. Explicitly, one tries to solve $ 0  \eqDE  E_{D+E}(F)([x], [y]_{D+E})$. Splitting $[y]_{D+E} = [y]_D + [\Delta y]_E T^D$ and performing a first order Taylor expansion of $F$ about $[y]_D$ yields after a short calculation
\begin{eqnarray}
[\Delta y]_E &\eqE & - [F_y]_E^{-1} [\Delta F]_E \;, \label{eqn:newton_hensel}
\end{eqnarray} 
where $E_{D+E}(F)([x], [y]_{D}) \eqDE [\Delta F]_E T^D$ and $[F_y]_E := E_E(\frac{\dd F}{\dd y})([x], [y]_{E})$.
Setting $E = D$ means that at each step the number of correct coefficients is doubled. In this case we call it \emph{Newton's method}. In the case $E=1$ only the next coefficient is computed. We call the special case $E=1$ \emph{sequential Hensel lifting} which is also the formula that is often given as part of the implicit function theorem. The difference is that Newton-Hensel lifting is a purely algebraic task. For a discussion on how to obtain asymptotically fast algorithms and for the nomenclature see e.g. Bernstein \cite{bernstein:2001,Bernstein:08}.

\subsection{The reverse mode}
The basic idea of the reverse mode of AD is to pullback linear forms $\alpha$ to obtain an explict mapping $\bar y \mapsto \bar x$. I.e., given $F: \R^N \rightarrow \R^M, \;y = F(x)$ one has
\begin{eqnarray}
\alpha(\bar y, y) &=& \sum_{m=1}^M \bar y_m \dd y_m  = \sum_{m=1}^M \bar y_m \sum_{n=1}^N \frac{\partial F_m}{\partial x_n} \dd x_n = \sum_{n=1}^N \bar x_n \dd x_n = \alpha( \bar x, x) \;,
\end{eqnarray}
where $\bar x_n = \sum_{m=1}^M \bar y_m \frac{\partial F_m}{\partial x_n}$.
For notational reasons one uses $\sum_{n=1}^N \bar x_n \dd x_n \equiv \bar x^T \dd x$. We call the action of going back one level of the symbolic dependence the \emph{pullback} of the linear form $\alpha(\bar y, y)$. For a more detailed discussion on calculations with differentials see Magnus and Neudecker \cite{Magn:Neud:1999}.

It is also possible to compute higher-order derivatives by combining UTP arithmetic and the reverse mode of AD. For that, the UTP algorithms are interpreted as functions mapping $D$ coefficients $x_d$,  $0 \leq d < D$ to $D$ coefficients $y_d$, $0 \leq d < D$, i.e., a mapping from $\R^{N \times D} 
\rightarrow \R^{M \times D}$ with a special structure. One can formally define a linear form by 
\begin{equation}
E_D(\alpha)( [\bar y]_D , [y]_D) := [y]_D^T \dd [y]_D \;. 
\end{equation}
 Here, $\dd [y]_D = \sum_{d=0}^{D-1} \dd y_d T^d$ is a formal polynomial where each coefficient is a differential and $[\bar y]_D^T \dd [y]_D = \sum_{m=1}^M [\bar y_m]_D \dd [y_m]_D$ computes the polynomial multiplication of formal polynomials. One can show that 
\begin{eqnarray}
E_D(\alpha)([\bar y]_D, [y]_D) &\eqD& [\bar y]_D^T E_D( \frac{\partial F}{\partial x})([x]_D) \dd [x]_D \eqD  [\bar x_n]_D^T \dd [x_n]_D  = E_D(\alpha)([\bar x]_D, [x]_D)
\end{eqnarray}
holds \cite{Christianson1991RAa}. One can interpret this result as follows: If $[\bar y]_D = w \in \R^{M}$ then $[\bar x]_D = E_D( w^T \frac{\partial F}{\partial x})([x]_D)$. Setting $w = e_i$ a Cartesian basis vector would yield the Taylor expansion of the $i$'th row of the Jacobian. The interpretation of the Taylor coefficients as derivatives yields higher-order derivatives. If $M=1$ and $w = 1$ one obtains the Taylor expansion of the gradient $ [\bar x]_D = E_D(\nabla F)([x]_D)$. E.g., propagating the UTP $[x]_2 = x_0 + x_1 T$ would yield $[\bar x]_2 = \bar x_0 + \bar x_1 T$ where $\bar x_0 = \nabla_x F(x)$ and $\bar x_1 = \nabla_x^2 F(x) \cdot x_1$, i.e., a Hessian-vector product.

\section{Defining equations of numerical linear algebra functions} \label{sec:utpm}
As briefly mentioned in the introduction, Numerical Linear Algebra (NLA) functions can be viewed as algorithms representing a concatenation of functions like $+,-,*,/,\sin,\cos, \dots$ and thus it is possible to apply the AD techniques described in the previous section directly to the algorithm. However, there is also the possibility to regard the problem from a more abstract point of view. Many NLA functions are implicitly defined by a system of equations. 

For instance the QR decomposition is defined by the defining equations
\begin{eqnarray}
0 &=& Q R - A \\
0 &=& Q^T Q - \Id \\
0 &=& P_L \circ  R \;,
\end{eqnarray}
where $A,R \in \R^{M \times N}$ with $M \geq N$  and $Q \in \R^{M \times M}$.
The functional dependence of the defining equations is denoted 
\begin{eqnarray}
Q, R = \qr(A) \;.
\end{eqnarray}
 Only the first $N$ rows $R_{:N,:} \in \R^{N \times N}$ of $R$ are nonzero. For convenience reasons we use the slicing notation $i:j = (i,i+1,\dots, j)$.

The defining equations of the symmetric eigenvalue decomposition are given by
\begin{eqnarray}
0 &=& Q^T A Q- \Lambda \\
0 &=& Q^T Q - \Id \\
0 &=& (P_L + P_R) \circ  \Lambda \;,
\end{eqnarray}
where $A \in \R^{M \times M}$ is symmetric.
The functional dependence is denoted $ \Lambda, Q = \eigh(A)$.
We call the matrices $(P_L)_{ij} = \delta_{j<i}$ and $(P_R)_{ij} = \delta_{i<j}$ skeletal projectors since their elementwise product with a matrix returns strictly lower resp. strictly upper triangular matrices.

\section{The QR decomposition}\label{sec:qr}
Before we derive algorithms based on the defining equations, we briefly investigate what can go wrong if a typical implementation of the QR decomposition using Householder reflections is evaluated in UTP arithmetic. Consider Algorithm 5.1.1 from the book ``Matrix Computations'' by Golub and Van Loan \cite{golub:1996} which we adapted to our notation  in Algorithm \ref{algo:golub_house}. From the AD point of view, the problematic part in the code is the check $\sigma = 0$. Since a paradigm of AD tools is that the control flow must remain unchanged, the check $\sigma =0 $ only considers the zeroth coefficient $x_0$ of a UTP. Hence, if $[x]_2 = e_1 + x_1 T$ is given as input and $x_1 \neq 0$, the algorithm will simply evaluate $\beta = 0$ and return. As final result, one obtains a matrix $[R]_2$ where $R_1$ is not upper triangular. The LAPACK implementation  (LAPACK-3.2.2) of DGEQRFP.f calls the subroutine DLARFGP.f which contains a similar check. Hence, automatic augmentation based on AD principles can go wrong in such cases. 

As a side remark, note that additionally the function realized by this algorithm has a pole at $\sigma=0$, producing numerical overflow for $\sigma\approx0$.
\begin{algorithm2e}[!h]
\SetKwInOut{Input}{input}\SetKwInOut{Output}{output}
\SetLine 
\Input{$x \in \R^N$ }
\Output{$v \in \R^N$ with $v_1 = 1$}
\Output{$\beta \in \R$ }
\BlankLine
$\sigma = x_{2:}^T x_{2:}$\\
$v = \begin{pmatrix}
      1\\ x_{2:}
     \end{pmatrix}
$\\
\eIf{$\sigma = 0$}{
$\beta = 0$
}
{
$\mu = \sqrt{ x_1^2 + \sigma} $\\
\eIf{$x_1 \leq 0$}{
$v_1 = x_1 - \mu$
}
{
$v_1 = - \sigma/(x_1 + \mu)$
}
$\beta = 2 v_1^2/(\sigma + v_1^2)$ \\
$v = v/v_1$
}
\caption{Householder Vector. The reflector is $P_v=I-\beta vv^\top$, with $v_1=1$.}\label{algo:golub_house}
\end{algorithm2e}

\subsection{Pushforward in Taylor arithmetic}
We now come to the higher--level approach that is based on the defining equations given in Section \ref{sec:utpm}. To compute $[Q]_D, [R]_D = E_D(\qr)([A]_D)$ one can apply Newton-Hensel lifting to solve
\begin{eqnarray}
0 &\eqD& [Q]_D [R]_D - [A]_D \\
0 &\eqD& [Q]_D^T [Q]_D - \Id \\
0 &\eqD& P_L \circ  [R]_D \;.
\end{eqnarray}
The direct application of \Eqn{eqn:newton_hensel} should be avoided since $F_y$ is sparse and has a lot of structure. Rather, one assumes that one has already computed $[Q]_D$ and $[R]_D$ and computes the next $1 \leq E \leq D$ coefficients by performing a first order Taylor expansion $[Q]_{D+E} = [Q]_D + [\Delta Q]_E T^D$ and $[R]_{D+E} = [R]_D + [\Delta R]_E T^D$ and tries to solve for the yet unknown $[\Delta R]_E$ and $[\Delta Q]_E$. As result one obtains Proposition \ref{prop:qr}. For convenience, we use the convention that $R_{d; i,j}$ is the $i$'th row and $j$'th column of the $d$'th coefficient of $[R]_D$. 
\begin{proposition} \label{prop:qr}
Let $[A]_{D+E} \in \R^{M \times N}[T]/(T^{D+E})$ with $M \geq N$ and $1 \leq E \leq D$, $[R]_D \in   \R^{M \times N}[T]/(T^D)$ where $[R_{:N,:}]_D$ is upper triangular with nonsingular $R_{0;:N,:}$ and $[Q]_D \in  \R^{M \times M}[T]/(T^D)$ orthogonal be given and satisfy the defining equations of order $D$. Then $[\Delta R_{:N,:}]_E \equiv [R_{:N,:}]_{D:D+E-1}$ and $[\Delta Q]_E \equiv [Q]_{D:D+E-1}$ are given by
\begin{eqnarray}
\; [\Delta F]_E T^D &\eqDE& - [Q]_D [R]_D + [A]_{D+E} \\
\; [\Delta G]_E T^D &\eqDE& - [Q]^T_D [Q]_D + \Id \\
\; [S]_E &\eqE&  \frac{1}{2} [\Delta G]_E \\
\; P_L \circ ([X_{:,:N}]_E) & \eqE & P_L \circ \left( [Q]^T_E [\Delta F]_E [R_{:N,:}]_E^{-1} \right) - P_L \circ [S_{:,:N}]_E \\
\; [\Delta R]_E & \eqE & [Q]_E^T [\Delta F]_E - ([S]_E + [X]_E) [R]_E \\
\; [\Delta Q]_E & \eqE & [Q]_E \left( [S]_E + [X]_E \right)\;,
\end{eqnarray}
where $P_L \in \R^{M \times N}$ with $(P_L)_{ij} = \delta_{j < i}$.
\end{proposition}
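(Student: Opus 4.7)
The plan is to apply Newton--Hensel lifting directly to the three defining equations of the QR decomposition. I would write $[Q]_{D+E}=[Q]_D+[\Delta Q]_E\,T^D$ and $[R]_{D+E}=[R]_D+[\Delta R]_E\,T^D$, substitute into each defining equation, and collect terms modulo $T^{D+E}$. Terms of order below $T^D$ cancel by the hypothesis that $[Q]_D,[R]_D$ already solve the equations to order $D$, and the assumption $E\le D$ ensures that products of two perturbations carry a factor $T^{2D}$ and hence drop. What remains is an inhomogeneous linear system in $[\Delta Q]_E,[\Delta R]_E$ whose inhomogeneities, after stripping the common factor $T^D$, are precisely the defects $[\Delta F]_E$ and $[\Delta G]_E$ of the statement.

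Next I would exploit the orthogonality relation to parameterize $[\Delta Q]_E$. Setting $[W]_E:=[Q]_E^T[\Delta Q]_E$, the linearized constraint coming from $[Q]_{D+E}^T[Q]_{D+E}\eqDE\Id$ reduces to $[W]_E+[W]_E^T\eqE[\Delta G]_E$. Because $[\Delta G]_E$ is symmetric, the symmetric part of $[W]_E$ is forced to equal $\tfrac12[\Delta G]_E=[S]_E$, while the skew-symmetric part $[X]_E$ remains free and must be fixed by the remaining equations. This yields the parameterization $[\Delta Q]_E\eqE[Q]_E\bigl([S]_E+[X]_E\bigr)$. Substituting into the linearized product equation $[Q]_E[\Delta R]_E+[\Delta Q]_E[R]_E\eqE[\Delta F]_E$ and left-multiplying by $[Q]_E^T$ immediately produces the stated formula for $[\Delta R]_E$.

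The step I expect to be the most delicate is extracting $[X]_E$ from the triangularity constraint $P_L\circ[\Delta R]_E\eqE 0$. Here I would right-multiply the expression for $[\Delta R]_E$ by $[R_{:N,:}]_E^{-1}$. Two structural facts make this clean: first, because the lower $(M-N)\times N$ block of $[R]_E$ vanishes, the product $[R]_E[R_{:N,:}]_E^{-1}$ is the $M\times N$ matrix whose top $N\times N$ block equals $\Id_N$ and whose bottom block is zero, so $([S]_E+[X]_E)[R]_E[R_{:N,:}]_E^{-1}$ selects only the first $N$ columns of $[S]_E+[X]_E$; second, the product of two upper-triangular matrices is upper triangular, so $P_L\circ\bigl([\Delta R]_E[R_{:N,:}]_E^{-1}\bigr)=0$. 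Applying $P_L\circ$ to the rearranged identity yields the displayed formula for $P_L\circ[X_{:,:N}]_E$. The remaining entries of the skew-symmetric $M\times M$ matrix $[X]_E$ are recovered from $[X]_E^T=-[X]_E$, which determines the strictly upper triangle of the first $N$ rows, together with the conventional gauge choice $[X_{N:,N:}]_E=0$ that removes the residual $(M-N)\times(M-N)$ rotational freedom in the trailing columns of $Q$. With $[X]_E$ in hand, back-substitution into the formulas above produces $[\Delta Q]_E$ and $[\Delta R]_E$, and a direct verification that all three defining equations now hold modulo $T^{D+E}$ completes the lifting step.
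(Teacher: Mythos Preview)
Your proposal is correct and follows essentially the same route as the paper's proof: linearize the defining equations via Newton--Hensel lifting, split $[Q]_E^T[\Delta Q]_E$ into symmetric and antisymmetric parts to force $[S]_E=\tfrac12[\Delta G]_E$, left-multiply the linearized factorization equation by $[Q]_E^T$ to isolate $[\Delta R]_E$, then right-multiply by $[R_{:N,:}]_E^{-1}$ and apply $P_L\circ$ to pin down the strictly lower part of $[X_{:,:N}]_E$, fixing the residual $(M-N)\times(M-N)$ skew block by a gauge choice. Your write-up is in fact a bit more explicit than the paper's about why the cross term $[\Delta R]_E[R_{:N,:}]_E^{-1}$ is annihilated by $P_L$ and about the nature of the remaining rotational freedom, but the argument is the same.
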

\begin{proof}
In the Appendix \ref{appendix:utpm_qr}.
\end{proof}

\subsection{Pullback}
\begin{proposition} \label{prop:pullback_qr}
Let $A, R, \bar R \in \R^{M \times N}$ resp. $Q,\bar Q \in \R^{M \times M}$ be given and it holds $M \geq N$, $\rank(A) = N$, $Q,R = \qr(A)$. Then $\bar A \in \R^{M \times N}$ can be computed by
\begin{eqnarray}
\bar A &=&  Q \left( \bar R + \left( P_L \circ \left(  R \bar R^T - \bar R R^T  + Q^T \bar Q - \bar Q^T Q \right) \right)R^{+T} \right)\;.
\end{eqnarray}
Here, $R^+$ denotes the Moore-Penrose pseudoinverse of $R$. That means it satisfies $R R^+ R = R$ and since $R$ has full column rank also $R^+ R = \Id$.
\end{proposition}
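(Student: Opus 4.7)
The plan is to prove the pullback formula by differentiating the three defining equations of the QR decomposition, expressing $\dd Q$ and $\dd R$ as linear functions of $\dd A$ subject to the resulting structural constraints, and then rearranging the pullback identity
\begin{equation*}
\tr(\bar Q^T \dd Q) + \tr(\bar R^T \dd R) = \tr(\bar A^T \dd A)
\end{equation*}
so as to read off $\bar A$. Differentiating $QR = A$, $Q^T Q = \Id$, and $P_L \circ R = 0$ gives
\begin{equation*}
\dd Q\, R + Q\, \dd R = \dd A, \qquad Q^T \dd Q + \dd Q^T Q = 0, \qquad P_L \circ \dd R = 0,
\end{equation*}
so that setting $W := Q^T \dd Q$ one has $W^T = -W$ and $\dd R = Q^T \dd A - W R$.

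Next I substitute $\dd Q = Q W$ and this expression for $\dd R$ into the left-hand side of the identity and use cyclicity of the trace to obtain
\begin{equation*}
\tr\bigl( W (\bar Q^T Q - R \bar R^T) \bigr) + \tr(\bar R^T Q^T \dd A).
\end{equation*}
The second summand already contributes $Q \bar R$ to $\bar A$. Because $W$ is antisymmetric, only the antisymmetric part of its cofactor survives, and the first summand equals $\tfrac{1}{2}\tr\bigl(W\,(\bar Q^T Q - Q^T \bar Q + \bar R R^T - R \bar R^T)\bigr)$, which equals, up to an overall sign, the bracket appearing in the claimed formula.

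The remaining step is to express $W$ itself as a linear function of $\dd A$ using the constraint $P_L \circ (W R) = P_L \circ (Q^T \dd A)$. Writing $R = (R_1^T, 0)^T$ with $R_1 \in \R^{N \times N}$ invertible because $\rank A = N$, and $W$ in $2 \times 2$ antisymmetric block form, the strictly lower triangular entries of $W_{11} R_1$ together with $W_{12}^T R_1$ determine the blocks $W_{11}$ and $W_{12}$ uniquely. Right-multiplication by $R^{+T}$ is precisely what combines both block solutions into a single matrix formula: $R^+$ applies $R_1^{-1}$ on the top $N$ rows and zero on the bottom $M - N$ rows, so $R^+ R = \Id$ but $R R^+ \neq \Id$ in general, which is why the pseudoinverse appears rather than an ordinary inverse. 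Substituting the resulting $W$ and matching the $\dd A$-coefficient with $\bar A^T$ yields the stated expression for $\bar A$.

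The main obstacle is the careful bookkeeping around the projector $P_L$ and the pseudoinverse: one must verify that the diagonal and strictly upper triangular parts of the bracket drop out (since $W$ is antisymmetric and $P_L \circ R = 0$ absorbs them), that the antisymmetric block $W_{22}$ does not contribute to the pullback, and that right-multiplication by $R^{+T}$ faithfully reassembles the $W_{11}$ and $W_{12}$ contributions into a single rectangular-matrix formula. Once these structural points are checked, the rest of the derivation is routine trace and block-matrix algebra.
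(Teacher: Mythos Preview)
Your proposal is correct and follows essentially the same route as the paper: differentiate the defining equations, set $W=Q^T\dd Q$ (the paper calls it $X$), obtain $\dd R = Q^T\dd A - WR$, substitute into the pullback identity, and use antisymmetry of $W$ together with the constraint coming from $P_L\circ\dd R=0$ to eliminate $W$ in favour of $\dd A$ via $R^+$. The only cosmetic difference is that you unfold the computation into explicit $2\times 2$ blocks, whereas the paper stays block-free by writing $X=(P_L\circ X)-(P_L\circ X)^T$, using $P_L\circ X = P_L\circ(Q^T\dd A\,R^+)$ directly, and applying the trace identity $\tr(A^T(B\circ C))=\tr(C^T(B\circ A))$ to move the projector; this sidesteps your separate verification for the $W_{22}$ block, which in the paper is simply absorbed by the free choice $X_{:,N+1:}=0$.
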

\begin{proof}
In Appendix \ref{appendix:reverse_qr}.
\end{proof}

\subsection{Explicit algorithms}
One can use Proposition \ref{prop:qr} to derive an explicit algorithm as shown in Algorithm \ref{algo:forward_qr}, where at each step $E=1$ is used.

\begin{algorithm2e}[!h]
\SetKwInOut{Input}{input}\SetKwInOut{Output}{output}

\SetLine 
\Input{$[A]_{D} = [A_0, \dots, A_{D-1}]$, where $A_d \in \R^{M \times N}$, $d=0,\dots,D-1$ and $\rank(A_0) = N$, $M \geq N$. }
\Output{$[Q]_{D} = [Q_0, \dots, Q_{D-1}]$ matrix with orthonormal column vectors, where $Q_d \in \R^{M \times N}$ , $d=0,\dots,D-1$ }
\Output{$[R]_{D} = [R_0, \dots, R_{D-1}]$ upper triangular, where $R_d \in \R^{N \times N}$ , $d=0,\dots,D-1$ }
\BlankLine
$Q_0, R_0 = \qr(A_0)$\\
\BlankLine
\For{$d = 1$ \KwTo $D-1$}{
$\Delta F =  A_{d} - \sum_{k=1}^{d-1} Q_{d-k} R_k $\\
$S =  - \frac{1}{2} \sum_{k=1}^{d-1} Q^T_{d-k} Q_k $\\
$X_{:,:N} = P_L \circ ( Q_0^T \Delta F R_{0;:N,:N}^{-1} - S_{:,:N})$ \\
$X_{:,N+1:} = 0$ \\
$X = X - X^T $\\
$R_d = Q_0^T \Delta F - (S + X) R_0$ \\
$Q_d = Q_0 ( S + X )$
}
\caption{ Sequential Hensel lifting for the QR decomposition.}\label{algo:forward_qr}
\end{algorithm2e}

The pullback can be computed in Taylor arithmetic. In the global derivative accumulation it is necessary to update the value of $[\bar A]_D$. This happens if $[A]_D$ is input of more than one function. The algorithm for the pullback takes this into consideration.

\begin{algorithm2e}[!h]
\SetKwInOut{Input}{input}\SetKwInOut{Output}{output}\SetKwInOut{Inout}{input/output}

\SetLine 
\Input{$[A]_{D} = [A_0, \dots, A_{D-1}]$, where $A_d \in \R^{M \times N}$, $d=0,\dots,D-1$, $M \geq N$. }
\Input{$[Q]_{D} = [Q_0, \dots, Q_{D-1}]$, where $Q_d \in \R^{M \times M}$ , $d=0,\dots,D-1$ }
\Input{$[R]_{D} = [R_0, \dots, R_{D-1}]$, where $R_d \in \R^{M \times N}$ , $d=0,\dots,D-1$ }
\Inout{$[\bar A]_{D} = [\bar A_0, \dots, \bar A_{D-1}]$, where $\bar A_d \in \R^{M \times N}$, $d=0,\dots,D-1$, $M \geq N$. }
\Input{$[\bar Q]_{D} = [\bar Q_0, \dots, \bar Q_{D-1}]$, where $\bar Q_d \in \R^{M \times M}$ , $d=0,\dots,D-1$ }
\Input{$[\bar R]_{D} = [\bar R_0, \dots, \bar R_{D-1}]$, where $\bar R_d \in \R^{M \times N}$ , $d=0,\dots,D-1$ }
\BlankLine
{
\begin{eqnarray*}
 [\bar A]_D &=& [\bar A]_D +  [Q]_D \cdot \\
&& \cdot \left( [\bar R]_D + \left( P_L \circ \left(  [R]_D [\bar R]_D^T - [\bar R]_D [R]_D^T  + [Q]_D^T [\bar Q]_D - [\bar Q]_D^T [Q]_D \right) \right) [R]_D^{+T} \right) 
\end{eqnarray*}}
\caption{Pullback of the QR decomposition in Taylor arithmetic. The inputs $[A]_D, [Q]_D, [R]_D$ must satisfy the defining equations.}
\end{algorithm2e}

\pagebreak[6]
\section{The real symmetric eigenvalue decomposition}\label{sec:eigh}
The problem of finding eigenvalues and eigenvectors arises in a wide variety of practical applications. As for the QR decomposition, we want to have algorithms that compute the real symmetric eigenvalue decomposition in UTP arithmetic as well as pullback algorithms. The symmetric eigenvalue decomposition is also important since the Singular Value Decomposition (SVD) of real matrices is closely related to it. More explicitly, one can compute the SVD of a matrix $A \in \R^{M \times N}$ of rank $r$., i.e., $A = U \Sigma V^T$, where $\Sigma = \diag( \Sigma_1, 0)$, $U = (U_1, U_2)$, $U_1 \in \R^{M \times r}$, $V = (V_1, V_2)$, $V_1 \in \R^{N \times r}$ as
\begin{eqnarray*}
C = \begin{pmatrix}
     0 & A \\ A^T & 0
    \end{pmatrix}
= P^T 
\begin{pmatrix}
\Sigma_1 & 0 & 0 \\
0 & - \Sigma_1 & 0 \\
0 & 0 & 0 \\
\end{pmatrix}
P \;,
\end{eqnarray*}
where
\begin{eqnarray*}
P = \frac{1}{\sqrt{2}} 
\begin{pmatrix}
U_1 & U_1 & \sqrt{2} U_2 & 0 \\
V_1 & - V_1 & 0 & \sqrt{2} V_2
\end{pmatrix}^T
\end{eqnarray*}
is orthogonal \cite{bjor:96}.

\subsection{Pushforward in Taylor arithmetic}
Given the symmetric polynomial matrix $[A]_D \in \R^{N\times N}[T]/(T^D)$. The eigenvalue decomposition is the solution $[\Lambda]_D, [Q]_D  \in \R^{N\times N}[T]/(T^D)$ of the implicit system
\begin{eqnarray}
0 &\eqD& [Q]_D^T [A]_D [Q]_D - [\Lambda]_D \\
0 &\eqD& [Q]_D^T [Q]_D - \Id \\
0 &\eqD& (P_L + P_R) \circ  [\Lambda]_D \;,
\end{eqnarray}
which is called the \emph{defining equations of order $D$}. We also assume that the eigenvalues are sorted as $[\Lambda_{11}]_D \leq [\Lambda_{22}]_D \leq \dots \leq [\Lambda_{NN}]_D$.
The functional dependence is denoted
\begin{eqnarray}
[\Lambda]_D, [Q]_D & = \eigh([A]_D) \;.
\end{eqnarray}
Let  $\Lambda, Q = \eigh(A)$ be the usual symmetric eigenvalue decomposition. We denote the diagonal of $[\Lambda]_D$ as $[\lambda]_D = \diag([\Lambda]_D)$. If eigenvalues are repeated, i.e., multiple, the eigenvectors generalize to eigenspaces and the columns of $Q$, that are associated to such a multiple eigenvalue, are not unique. Rather, any orthonormal basis could be the result. This has consequences for the Hensel-Newton lifting approach, because given $[Q]_D$ and $[R]_D$ that satisfy the defining equations of order $D$ it is generally not possible to find a $[\Delta Q]_E$ and $[\Delta R]_E$ such that $[Q]_{D+E} = [Q]_D + [\Delta Q]_E T^D$ and $[R]_{D+E} = [R]_D + [\Delta R]_E T^D$ satisfy the defining equations of order $D+E$. The higher-order coefficients $[\Delta A]_E$ enforce additional conditions on the chosen basis of the eigenspaces.
A wrong choice of $[Q]_D$ means that $0 \eqDE (P_L + P_R) \circ  [\Lambda]_{D+E}$ cannot be satisfied. However, $0 \eqD P_b^{D} \circ  [\Lambda]_{D+E}$ can be satisfied.
The matrix $P_b^D$ is a skeletal projector with zero blocks on the main diagonal whose size corresponds to the multiplicity of an eigenvalue $[\lambda]_D$ and all other entries are ones. 
The \emph{multiplicity} $m^d([\lambda_{j}]_D)$ of an eigenvalue $[\lambda_{j}]_D$ of \emph{level} $d$ is defined to be the number of $i \in \N$ s.t. $[\lambda_{j}]_D \stackrel{d}{=} [\lambda_{i}]_D$. I.e., 
\begin{eqnarray*}
\diag([\Lambda]_d) &=& ( \underbrace{[\lambda_{1}]_d, \dots, [\lambda_{1}]_d}_{m^d([\lambda_{1}]_D) \mbox{ times}}, \dots, \underbrace{[\lambda_{\Nb^d }]_d, \dots, [\lambda_{\Nb^d }]_d}_{m^d([\lambda_{\Nb^d }]_D) \mbox{ times}} ),
\end{eqnarray*}
 where $\Nb^d$ is the number of different eigenvalues at level $d$.
We define $b^d \in \N^{\Nb^d + 1}$ to be a vector satisfying $ m^d([\lambda_{\nb }]_D) = b_{\nb+1}^d - b_{\nb}^d$. The symbol $b$ is used because it relates to blocks in the matrix. The elements of $P_b^d$ satisfy  $(P^d_b)_{ij} = 1 - \sum_{\nb =1}^{\Nb^d +1} \delta_{ b_\nb^d \leq i < b_{\nb+1}^d}  \delta_{ b_\nb^d \leq j < b_{\nb+1}^d}$. This notation is a little cumbersome but turns out to be helpful. One defines $b^0 = [0,N+1]$. The vector $b^1$ represents the multiplicities of the usual symmetric eigenvalue decomposition. E.g., for $N = 3$ and $b^d=[1,3,4]$ one has
\begin{eqnarray*}
P_b^d =
\begin{pmatrix}
0 & 0 & 1 \\
0 & 0 & 1 \\
1 & 1& 0
\end{pmatrix}
 \;.
\end{eqnarray*}
We reformulate the overall problem as a sequence of subproblems. We call the implicit system
 \begin{eqnarray}
0 &\eqD& [Q^d]_D^T [A]_D [Q^d]_D - [\Lambda^{d}]_D \\
0 &\eqD& [Q^d]_D^T [Q^d]_D - \Id \\
0 &\stackrel{d}{=}& (P_L + P_R) \circ  [\Lambda]_{d} \\
0 &\eqD& P_b^d \circ  [\Lambda^{d}]_D \;,
\end{eqnarray}
the \emph{relaxed problem of level $d$ and order $D$}. I.e., it is assumed that up to order $d$ the original problem is solved but only block diagonalized for the higher order coefficients.

To give an illustrative example consider this relaxed problem of order $3$ and level $2$. At this point of the algorithm, one has potentially obtained a matrix polynomial $[\Lambda]_3 = \sum_{d=0} \Lambda_d T^d$ with coefficients of the form
\begin{eqnarray*}
\Lambda_0 =
\begin{pmatrix}
1\\
&1 \\
&&1\\
&&&2\\
&&&&2\\
&&&&&3
\end{pmatrix}, \quad
\Lambda_1 = 
\begin{pmatrix}
2\\
&2 \\
&&3\\
&&&2\\
&&&&2\\
&&&&&2
\end{pmatrix}, \quad
\Lambda_2 = 
\begin{pmatrix}
1 & 3\\
3&5 \\
&&7\\
&&&1&2\\
&&&2&3\\
&&&&&7
\end{pmatrix} \;.
\end{eqnarray*}
I.e., $\Lambda_0$ and $\Lambda_1$ are already diagonal. Since there are two eigenvalues with multiplicity $m^2([\lambda]_3) = 2$ it follows that $\Lambda_2$ is only block diagonal.
Note that the eigenvalues are not globally sorted by value in the higher coefficients but only in the subblocks defined by the lower order coefficients. In this example, the repeated eigenvalues in the first block split at the lift from $d=0 $ to $d=1$. The blocks are defined by $b^1 = [1,4,6,7]$ and $b^2=[1,3,4,6,7]$. The blocks in $\Lambda_2$ are defined by $b^2$.

The function that solves the relaxed problem of order $D$ and level $d$ is denoted
\begin{eqnarray}
[\Lambda^d]_D, [Q^d]_D &=& \eigh_d( [A]_D) \;.
\end{eqnarray}
The idea is to implement an algorithm that successively increases $d$ by one.  For convenience we define $[Q^0]_D := \Id$ and $[\Lambda^0]_D := [A]_D$.

\begin{theorem}\label{thm:eigh}
Let $[A]_D$ be given. Then the solution of
\begin{eqnarray}
[ Q^{d+1}]_D, [\Lambda^{d+1}]_D \eqD \eigh_{d+1}([A]_D) 
\end{eqnarray}
can be computed from the solution
$ [ Q^d]_D, [\Lambda^d]_D \eqD \eigh_d([A]_D) $
by computing
\begin{eqnarray}
[\hat \Lambda_{s,s} ]_{D-d}, [\hat Q_{s,s} ]_{D-d} &\stackrel{D-d}{=}& \eigh_1 ( [\Lambda^d_{s,s} ]_{d:}) \label{eqn:eigh_1} \;,
\end{eqnarray}
where $s = {b^d_{\nb}:b^d_{\nb + 1}-1}$ are slice indices and $\nb = 1,\dots, \Nb^d$. All other elements of $[\hat Q]_{D-d}$ and $[\hat \Lambda]_{D-d}$ are zero. I.e., $[\hat Q]_{D-d}$ and $[\hat \Lambda]_{D-d}$ are block diagonal.
It holds that
\begin{eqnarray}
\; [\Lambda^{d+1} ]_D &\eqD& [\Lambda^d]_{d} + [\hat \Lambda]_{D-d} T^d \\
\; [Q^{d+1}]_D &\eqD& [Q^d]_D [Q]_D \;,
\end{eqnarray}
where $[Q]_D = [\hat Q]_{D-d} + [\Delta Q]_d T^{D-d}$ for some $[\Delta Q]_{D-d}$ that satisfies
\begin{eqnarray}
0 \eqD [Q]^T_D [Q]_D - \Id \label{eqn:q_lift}\;.
\end{eqnarray}

\end{theorem}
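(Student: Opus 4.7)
The plan is to construct the level-$(d+1)$ solution from the level-$d$ one via the ansatz $[Q^{d+1}]_D \eqD [Q^d]_D [Q]_D$ together with $[\Lambda^{d+1}]_D \eqD [\Lambda^d]_d + [\hat\Lambda]_{D-d} T^d$, and then verify the four defining equations of the relaxed problem at level $d+1$ one by one. Because $[Q^d]_D$ is orthogonal by hypothesis, the orthogonality equation $[Q^{d+1}]_D^T [Q^{d+1}]_D \eqD \Id$ reduces to requiring $[Q]_D^T [Q]_D \eqD \Id$, which is precisely \eqref{eqn:q_lift}. Similarly, substituting $[Q^d]_D^T [A]_D [Q^d]_D \eqD [\Lambda^d]_D$ turns the diagonalization condition into $[Q]_D^T [\Lambda^d]_D [Q]_D \eqD [\Lambda^{d+1}]_D$. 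At this point I would exploit that $[\Lambda^d]_D$ is block-diagonal with block pattern $b^d$ and that, because the blocks collect eigenvalues coinciding up to order $d-1$, the first $d$ Taylor coefficients of each diagonal block $[\Lambda^d_{s,s}]_D$ are scalar matrices $\lambda_j \Id_{|s|}$. Consequently the problem decouples across blocks, and it suffices to verify it for one block with $[Q_{s,s}]_D := [\hat Q_{s,s}]_{D-d} + [\Delta Q_{s,s}]_d T^{D-d}$.

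Inside a single block I would split $[\Lambda^d_{s,s}]_D = \Lambda^{\mathrm{scal}}_{s,s} + \tilde\Lambda_{s,s} T^d$, where $\Lambda^{\mathrm{scal}}_{s,s}$ carries the scalar-matrix coefficients of degrees $0,\dots,d-1$ and $\tilde\Lambda_{s,s}$ is the shifted UTP whose $k$-th coefficient is $\Lambda^d_{s,s;d+k}$. The scalar part commutes with everything, so $[Q_{s,s}]^T \Lambda^{\mathrm{scal}}_{s,s} [Q_{s,s}] = \Lambda^{\mathrm{scal}}_{s,s}\cdot [Q_{s,s}]^T [Q_{s,s}] \eqD \Lambda^{\mathrm{scal}}_{s,s}$ as soon as orthogonality of $[Q_{s,s}]_D$ is established. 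The second part is the crucial computation: any cross term in the expansion of $[Q_{s,s}]^T \tilde\Lambda\, T^d [Q_{s,s}]$ that picks up a factor of $[\Delta Q_{s,s}]_d T^{D-d}$ already carries degree at least $T^{D-d+d} = T^D$ and therefore vanishes modulo $T^D$. Hence $[Q_{s,s}]^T \tilde\Lambda\, T^d [Q_{s,s}] \eqD [\hat Q_{s,s}]^T \tilde\Lambda [\hat Q_{s,s}]\cdot T^d$, and the defining property of $\eigh_1$ at order $D-d$ rewrites this as $[\hat\Lambda_{s,s}]_{D-d} T^d$ mod $T^D$. Summing the two contributions reproduces exactly $[\Lambda^{d+1}_{s,s}]_D$.

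The remaining task, which I expect to be the main obstacle, is to exhibit $[\Delta Q_{s,s}]_d$ making $[Q_{s,s}]_D$ orthogonal modulo $T^D$. Because $[\hat Q_{s,s}]^T [\hat Q_{s,s}] \stackrel{D-d}{=} \Id$, the deviation $[\hat Q_{s,s}]^T [\hat Q_{s,s}] - \Id$ starts at degree $D-d$, leaving exactly $d$ symmetric residuals to cancel. I would invoke a Newton--Hensel lift: at each new coefficient $k = 0,\dots,d-1$ of $[\Delta Q_{s,s}]_d$ the orthogonality condition becomes the linear equation $\hat Q_{s,s;0}^T \Delta Q_{s,s;k} + \Delta Q_{s,s;k}^T \hat Q_{s,s;0} = -r_k$ for a symmetric residual $r_k$ determined by the already-computed coefficients; invertibility of the orthogonal $\hat Q_{s,s;0}$ makes this solvable (the antisymmetric part of $\hat Q_{s,s;0}^T \Delta Q_{s,s;k}$ is free), which proves existence of $[\Delta Q_{s,s}]_d$. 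Finally I would verify the two structural conditions: $(P_L+P_R)\circ[\Lambda^{d+1}]_{d+1}\stackrel{d+1}{=}0$ since the first $d$ coefficients of $[\Lambda^{d+1}]_D$ inherit diagonality from $[\Lambda^d]_d$ while the $d$-th coefficient equals the diagonal matrix $[\hat\Lambda_{s,s}]_0$, and $P_b^{d+1}\circ[\Lambda^{d+1}]_D \eqD 0$ because $[\hat\Lambda_{s,s}]_{D-d}$ is already block-diagonal with the multiplicity pattern of $[\hat\Lambda_{s,s}]_0$, which is exactly how $b^{d+1}$ refines $b^d$. The principal subtleties are the careful bookkeeping of three interacting degree scales ($D$ for the ambient modulus, $D-d$ for $\hat Q$, and $d$ for the scalar-matrix prefix) and the argument that cross terms are suppressed by the degree arithmetic rather than by any cancellation.
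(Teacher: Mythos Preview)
Your argument is correct and follows essentially the same route as the paper: reduce orthogonality of $[Q^{d+1}]_D$ to that of $[Q]_D$, reduce the diagonalization condition to $[Q]_D^T[\Lambda^d]_D[Q]_D\eqD[\Lambda^{d+1}]_D$, split $[\Lambda^d]_D$ into its scalar-on-blocks prefix of length $d$ and a tail carrying the factor $T^d$, use commutativity for the prefix and the degree count $T^{D-d}\cdot T^d=T^D$ to drop the $[\Delta Q]$ cross terms in the tail, and then invoke the defining property of $\eigh_1$. The only organizational difference is that the paper checks the two structural conditions first and outsources the existence of $[\Delta Q]$ to a separate lemma (the orthogonality lift), whereas you verify structure last and sketch the Newton--Hensel lift inline.
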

\begin{proof}
We need to show that $[\Lambda^{d+1}]_D$, $[Q^{d+1}]_D$ is a solution to the relaxed equations of level $d+1$ and order $D$.
From the definition of $\eigh_1$ it follows that $ 0 = (P_L + P_R) \circ [\Lambda^{d+1}]_{d+1}$ and  $0 = P_b^{d+1} \circ [\Lambda^{d+1}]_D$ is satisfied. We also know that $ 0 \eqD  [Q^{d+1}]_D^T   [Q^{d+1}]_D - \Id \eqD [Q]_D^T  [Q^{d}]_D^T  [Q^d]_D [Q]_D - \Id$ is fulfilled because   $ 0 \eqD  [Q^{d}]_D^T  [Q^{d}]_D - \Id$ and $ 0 \eqD [Q]_D^T  [Q]_D^{T} - \Id$. Hence, it only remains to show that the third defining equation is satisfied which is shown by the following straight-forward calculation:
\begin{eqnarray*}
0 &\eqD& [ Q]_D^T  [Q^{d}]_D^T [A]_D   [Q^{d}]_D [ Q]_D -  [\Lambda^{d+1}]_D \\
&\eqD& [ Q]_D^T  [\Lambda^d]_D [ Q]_D -  [\Lambda^{d+1}]_D \\
&\eqD& [ Q]_D^T ( [\Lambda^d]_{d} +  [\Lambda^d]_{d:} T^d ) [ Q]_D -  [\Lambda^{d}]_{d} -  [\hat \Lambda]_{D-d} T^d \\
&\eqD& [ Q]_D^T  [\Lambda^d]_{d} [ Q]_D + [ Q]_D^T  [\Lambda^d]_{d:} [ Q]_D T^d -  [\Lambda^{d}]_{d} -  [\hat \Lambda]_{D-d} T^d \\
&\eqD&[\Lambda^d]_{d} [ Q]_D^T [ Q]_D
 + [\hat Q]_{D-d}^T  [\Lambda^d]_{d:} [\hat Q]_{D-d} T^d -  [\Lambda^{d}]_{d} -  [\hat \Lambda]_{D-d} T^d \\
&\eqD&   [\hat Q]_{D-d}^T  [\Lambda^d]_{d:} [\hat Q]_{D-d} T^d -  [\hat \Lambda]_{D-d} T^d \\
&\stackrel{D-d}{=}&   [\hat Q]_{D-d}^T  [\Lambda^d]_{d:} [\hat Q]_{D-d}  -  [\hat \Lambda]_{D-d} \;.
\end{eqnarray*}
In the fifth line it has been used that the diagonalization has only to be performed for block diagonal matrices. If the eigenvalues are already distinct there is nothing to diagonalize and the step can be skipped. It also means that one may interchange $[\Lambda^d]_{d}$ with $[Q]_D$.
\end{proof}

The following proposition gives us the means to diagonalize a matrix in the zeroth degree and block diagonalize w.r.t. the blocks defined by the repeated eigenvalues.
I.e., it gives the justification that the solution of \Eqn{eqn:eigh_1} can be found. In the case of distinct eigenvalues the application of this algorithm already solves the original problem.
\begin{proposition}\label{prop:order_one_relaxed_problem}
Let $[A]_{D+E} = [A]_D + [\Delta A]_E T^D$ be given and $[\Lambda^d]_D$, $[Q^d]_D$ be a solution of the relaxed problem of level $d=1$ and order $D$. Then it exist $[\Delta \Lambda^d]_E$ and $[\Delta Q^d]_E$ such that $[\Lambda^d]_{D+E} = [ \Lambda^d]_D + [\Delta \Lambda^d]_E T^D$ and 
$[\Delta Q^d]_{D+E} = [\Delta Q^d]_D + [\Delta Q^d]_E T^D$ are a solution of the relaxed problem of level $d=1$ and order $D+E$. A closed form solution is
\begin{eqnarray}
\; [\Delta \Lambda^d]_E & \eqE & \bar P_b^d \circ [K]_E \\
\; [\Delta Q^d]_E & \eqE & [Q^d]_E \left( [\Delta G]_E + P_b^d \circ \left( [K]_E/[E]_E \right) \right)
\end{eqnarray}
where $[\Delta F]_E T^D \eqDE [Q^d]^T_D [A]_D [Q^d]_D - [\Lambda^d]_D$ and $[\Delta G]_E T^D \eqDE -\frac{1}{2} \left( [Q^d]^T_D [Q^d]_D - \Id \right)$, $[K]_E \eqE  [\Delta F]_E + ( [\Lambda]_E [\Delta G]_E + [\Delta G]_E [\Lambda]_E ) + [Q^d]^T_E [\Delta A]_E [Q^d]_E $ and $[E_{ij}]_E \eqE [\Lambda^d_{jj}]_E - [\Lambda^d_{ii}]_E$. The expression $[K]_E/[E]_E$ denotes an element-wise division. $P_b^d$ is a matrix with only ones on the diagonal blocks defined by the multiplicity of eigenvalues in $\Lambda_0$. $\bar P_b^d$ is defined s.t. $\bar P_b^d + P_b^d$ is a matrix full of ones. One can see here that if the eigenvalues are distinct, then $\bar P_b^d$ is the identity matrix $\Id$.
\end{proposition}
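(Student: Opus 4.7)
The plan is to execute a Newton-Hensel lift in direct analogy to the QR derivation in Proposition \ref{prop:qr}. I would first substitute the linear ansatz $[Q^d]_{D+E}\eqDE[Q^d]_D+[\Delta Q^d]_E T^D$ and $[\Lambda^d]_{D+E}\eqDE[\Lambda^d]_D+[\Delta\Lambda^d]_E T^D$ into the three defining equations of the level-$1$ relaxed problem at order $D+E$, and exploit the hypothesis $E\leq D$ so that $2D\geq D+E$ kills every quadratic cross term modulo $T^{D+E}$. What remains is an inhomogeneous linear system whose right-hand sides are the order-$D$ residuals $[\Delta F]_E T^D$ and $-2[\Delta G]_E T^D$ given in the statement.

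Second, I would dispose of the orthogonality equation by the substitution $[\Delta Q^d]_E\eqE[Q^d]_E[H]_E$ (valid since $[Q^d]_E$ is invertible in $\R^{N\times N}[T]/(T^E)$). This reduces orthogonality to $[H]_E+[H]^T_E\eqE 2[\Delta G]_E$, whose general solution decomposes uniquely as $[H]_E=[\Delta G]_E+[X]_E$ with $[X]_E$ skew-symmetric. Plugging the same substitution into the $T^D$-coefficient of the eigen-equation and using $[Q^d]^T_E[A]_E[Q^d]_E\eqE[\Lambda^d]_E$ (the order-$E$ truncation of the given order-$D$ solution) collapses the linear term to $[\Lambda^d]_E[H]_E+[H]^T_E[\Lambda^d]_E$; regrouping the symmetric part with $[\Delta F]_E+[Q^d]^T_E[\Delta A]_E[Q^d]_E$ into $[K]_E$ then yields
\[
[\Delta\Lambda^d]_E\eqE[K]_E+[\Lambda^d]_E[X]_E-[X]_E[\Lambda^d]_E.
\]

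Third, I would enforce the block-diagonality constraint $P_b^d\circ[\Delta\Lambda^d]_E\eqE 0$ by splitting the last equation into same-block and different-block indices. For $(i,j)$ in the same block, the commutator $[\Lambda^d,X]_{E;ij}$ vanishes because $[\Lambda^d]_E$ is block-diagonal and the candidate $[X]_E=P_b^d\circ([K]_E/[E]_E)$ has zero entries there, giving $[\Delta\Lambda^d_{ij}]_E\eqE[K_{ij}]_E$, which matches $\bar P_b^d\circ[K]_E$. For $(i,j)$ in different blocks, $[E_{ij}]_E$ has leading coefficient $\Lambda_{0;jj}-\Lambda_{0;ii}\neq 0$ and is therefore a unit in $\R[T]/(T^E)$, so the elementwise assignment $[X_{ij}]_E\eqE[K_{ij}]_E/[E_{ij}]_E$ solves the scalar residual equation while simultaneously forcing $[\Delta\Lambda^d_{ij}]_E$ to vanish. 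Symmetry of $[K]_E$—inherited from symmetry of $[A]_E$, $[\Lambda^d]_E$, and $[\Delta A]_E$—combined with $[E_{ji}]_E=-[E_{ij}]_E$ then forces the resulting $[X]_E$ to be skew-symmetric, closing the parameterization; the diagonal entries $X_{ii}$ are well-defined because $(P_b^d)_{ii}=0$ suppresses the singular $1/[E_{ii}]_E$.

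The hard part will be the different-block analysis once $[\Lambda^d]_E$ carries non-trivial within-block coefficients at orders $\geq 1$: the commutator $[\Lambda^d,X]_E$ then couples the entries of $[X]_E$ across each off-diagonal block into a genuine Sylvester-type system, and the naive elementwise division does not literally diagonalize it. I would resolve this by expanding in powers of $T$ and exploiting that the level-$1$ hypothesis $(P_L+P_R)\circ[\Lambda^d]_1\eqd 0$ forces $\Lambda_0$ to be diagonal, so the leading-order commutator acts elementwise as $-X_{ij}[E_{ij}]_0$; the higher-order within-block contributions of $[\Lambda^d]_E$ then produce only symmetric corrections that are absorbed into the definition of $[K]_E$ through its $[\Lambda^d]_E[\Delta G]_E+[\Delta G]_E[\Lambda^d]_E$ symmetrizer. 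Alternatively, restricting to the single-step lift $E=1$ used by Theorem \ref{thm:eigh} makes $[\Lambda^d]_E=\Lambda_0$ truly diagonal and the stated closed form exact, with the general $E$ case following by iteration.
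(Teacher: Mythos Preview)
Your proposal is correct and follows essentially the same route as the paper: both perform a Newton--Hensel lift on the defining equations, split $[Q]_E^T[\Delta Q]_E$ into its symmetric part $[S]_E=[\Delta G]_E$ and skew part $[X]_E$, reduce the eigen-equation to $[\Delta\Lambda]_E\eqE[K]_E$ plus the commutator of $[\Lambda]_E$ with $[X]_E$, and then read off the on-block and off-block pieces. Your closing remark about the Sylvester coupling when $[\Lambda^d]_E$ is only block-diagonal for $E>1$ is in fact more careful than the paper's own proof, which simply writes the commutator as $[E]_E\circ[X]_E$ without comment; your resolution via the sequential lift $E=1$ (so that $[\Lambda^d]_E=\Lambda_0$ is genuinely diagonal) is exactly what Algorithm~\ref{algo:blockdiag} uses.
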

\begin{proof}
We set $Q^d \equiv Q$ etc. for notational simplicity.
Applying Newton-Hensel lifting to the defining equations yields
\begin{eqnarray}
0 & \eqDE & ([Q]_D + [\Delta Q]_E T^D)^T ([Q]_D + [\Delta Q]_E T^D) - \Id \nonumber \\
 & \eqE & - 2 [\Delta G]_E + [\Delta Q]_E^T [Q]_E + [Q]_E^T [\Delta Q]_E \nonumber\\
 & \eqE & -2 [\Delta G]_E + 2 [S]_E \;, \nonumber\\
0 & \eqDE & ([Q]_D + [\Delta Q]_E T^D)^T ( [A]_D + [\Delta A]_E T^D) ([Q]_D + [\Delta Q]_E T^D) - ([\Lambda]_D + [\Delta \Lambda]_E T^D) \nonumber \\
& \eqE & [\Delta F]_E + [Q]_E^T [\Delta A]_E [Q]_E +  [\Delta Q]_E^T [Q]_E [\Lambda]_E + [\Lambda]_E [Q]_E^T [\Delta Q]_E - [\Delta \Lambda]_E \nonumber \\
& \eqE & [K]_E + [X]_E [\Lambda]_E - [\Lambda]_E [X]_E - [\Delta \Lambda]_E \nonumber \\
& \eqE & [K]_E + [E]_E \circ [X]_E - [\Delta \Lambda]_E \;.
\end{eqnarray}
Thus $[\Delta \Lambda]_E \eqE \bar P_b^d \circ [K]_E$ and $[X]_E^T \eqE P_b^d \circ ( [K]_E/[E]_E)$.
Above, $[\Delta Q]_E^T [Q]_E \eqE [S]_E + [X]_E$, $[S]_E$ symmetric and $[X]_E$ antisymmetric (Lemma \ref{lemma:symmetric_antisymmetric}) has been used.
\end{proof}

It remains to show that \Eqn{eqn:q_lift} can be satisfied.
\begin{lemma}\label{lemma:qlift}
Let $[Q]_D$ be given and it satisfies the defining equation $ 0 \eqD [Q]_D^T [Q]_D - \Id$. Then the solution can be lifted to $D+E$ with $E\leq D$. I.e.,  it is possible to find $[Q]_{D+E} := [Q]_D + [\Delta Q]_E T^D$ s.t. $ 0 \eqDE [Q]_{D+E}^T [Q]_{D+E} - \Id$. A closed form solution for $[\Delta Q]_E $ is given by
\begin{eqnarray}
\; [\Delta Q]_E & \eqE &  [Q]_E [S]_E  \;,
\end{eqnarray}
where $[S]_E T^D \stackrel{D+E}{=} - \frac{1}{2} \left( [Q]_D^T [Q]_D - \Id \right)$.
\end{lemma}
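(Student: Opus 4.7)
The plan is to carry out a direct Newton--Hensel lifting calculation and verify that the proposed $[\Delta Q]_E$ satisfies the resulting defining equation at order $D+E$.

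First I would substitute the ansatz $[Q]_{D+E} = [Q]_D + [\Delta Q]_E T^D$ into $[Q]_{D+E}^T [Q]_{D+E} - \Id$ and expand:
\begin{equation*}
[Q]_{D+E}^T [Q]_{D+E} - \Id = \bigl([Q]_D^T [Q]_D - \Id\bigr) + \bigl([Q]_D^T [\Delta Q]_E + [\Delta Q]_E^T [Q]_D\bigr) T^D + [\Delta Q]_E^T [\Delta Q]_E\, T^{2D}.
\end{equation*}
The key simplification is that, because $E \leq D$, the quadratic term is of order $T^{2D}$ with $2D \geq D+E$, so it vanishes modulo $T^{D+E}$. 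Likewise the product $[Q]_D^T [\Delta Q]_E$ only needs to be known modulo $T^E$, so $[Q]_D$ may be replaced by $[Q]_E$ in the linear term.

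Next I would rewrite the equation $0 \eqDE [Q]_{D+E}^T [Q]_{D+E} - \Id$ using the definition of $[S]_E$, namely $[S]_E T^D \eqDE -\tfrac{1}{2}\bigl([Q]_D^T [Q]_D - \Id\bigr)$. This reduces the lifting condition to
\begin{equation*}
2[S]_E \eqE [Q]_E^T [\Delta Q]_E + [\Delta Q]_E^T [Q]_E.
\end{equation*}
Substituting the proposed $[\Delta Q]_E \eqE [Q]_E [S]_E$ and using that $0 \eqE [Q]_E^T [Q]_E - \Id$ (which follows from the hypothesis since $E \leq D$) the right-hand side collapses to $[S]_E + [S]_E^T$.

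Finally, I would observe that $[S]_E$ is symmetric: by construction it equals $-\tfrac{1}{2}$ times a truncation of $[Q]_D^T [Q]_D - \Id$, and this matrix polynomial is symmetric coefficient-wise. Hence $[S]_E + [S]_E^T = 2[S]_E$ and the defining equation at order $D+E$ is satisfied. The only subtle point, and essentially the only place the hypothesis $E \leq D$ enters, is in discarding the $T^{2D}$ term; everything else is a straightforward algebraic verification, so I do not anticipate a genuine obstacle.
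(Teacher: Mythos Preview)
Your proposal is correct and follows essentially the same Newton--Hensel expansion as the paper's proof: expand $[Q]_{D+E}^T[Q]_{D+E}-\Id$, discard the $T^{2D}$ term using $E\le D$, and reduce to the linear condition $2[S]_E \eqE [Q]_E^T[\Delta Q]_E + [\Delta Q]_E^T[Q]_E$. The only cosmetic difference is that the paper \emph{derives} $[\Delta Q]_E$ by writing $[Q]_E^T[\Delta Q]_E = [S]_E + [X]_E$ with $[X]_E$ antisymmetric, noting $[X]_E$ is unconstrained and setting it to zero, whereas you \emph{verify} the stated formula directly by substitution and the symmetry of $[S]_E$; the underlying algebra is identical.
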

\begin{proof}
In Appendix \ref{appendix:qlift}.
\end{proof}

\subsection{Pullback} 
The eigenvalue decomposition is non-differentiable at points where eigenvalues are repeated and hence the defining equations do not define a \emph{well behaved implicit mapping} as described by Christianson \cite{Christianson1998RAa}.
However, the eigenvalue decomposition is typically used within a global context where the non-uniqueness and non-differentiability can be worked around. Here, we give only the pullback algorithm that is correct for unique eigenvalues.

\begin{proposition}[Pullback of the Symmetric Eigenvalue Decomposition with Distinct Eigenvalues:]\label{prop:eigh_pullback}
Given $\textred{A}, \textred{Q}, \textred{\Lambda}, \textred{\bar Q}, \textred{\bar \Lambda}$, where all eigenvalues are distinct, one can compute $\textgreen{\bar A}$ by
\begin{eqnarray}
\; H_{ij}&=& (\textred{\lambda_j} - \textred{\lambda_i})^{-1} \quad \mbox{if} \quad  i \neq j , \quad 0 \quad \mbox{ else} \\
\textgreen{\bar A} &=& \textred{Q} \left( \textred{\bar \Lambda} + H \circ (\textred{Q^T \bar Q})\right) \textred{Q^T}
\end{eqnarray}
\end{proposition}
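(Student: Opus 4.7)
The plan is to use differential calculus on the defining equations, derive the sensitivities $d\Lambda$ and $dQ$ in terms of $dA$, and then read off $\bar A$ from the pullback identity
\[
 \text{tr}(\bar\Lambda^T \, d\Lambda) + \text{tr}(\bar Q^T \, dQ) \stackrel{!}{=} \text{tr}(\bar A^T \, dA).
\]
Under the distinct-eigenvalue assumption, the sensitivities are uniquely determined, so the identification of $\bar A$ will be unambiguous.

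First I would differentiate the two defining equations $Q^T A Q = \Lambda$ and $Q^T Q = \Id$. Setting $C := Q^T dQ$, the second equation immediately gives $C^T + C = 0$, so $C$ is antisymmetric (in particular $C_{ii} = 0$). Substituting $dQ = Q C$ into the differential of the first equation yields
\[
 Q^T \, dA \, Q \;=\; d\Lambda \;+\; \Lambda C - C \Lambda.
\]
Since $\Lambda$ is diagonal, the commutator has vanishing diagonal and off-diagonal entries $(\Lambda C - C\Lambda)_{ij} = (\lambda_i - \lambda_j) C_{ij}$. Reading off the diagonal part gives $d\Lambda = \diag(Q^T dA\, Q)$ (together with $0 = (P_L+P_R)\circ d\Lambda$), and reading off the off-diagonal part gives $C_{ij} = (\lambda_j - \lambda_i)^{-1} (Q^T dA\, Q)_{ij}$ for $i\neq j$, which is exactly $C = H \circ (Q^T dA\, Q)$.

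Now I would plug these into the pullback identity. With $M := Q^T \bar Q$, the $\bar Q$-term rewrites as
\[
 \text{tr}(\bar Q^T dQ) \;=\; \text{tr}(M^T C) \;=\; \sum_{i,j} M_{ij}\, H_{ij}\, (Q^T dA\, Q)_{ij} \;=\; \text{tr}\bigl( (H \circ M)^T\, Q^T\, dA\, Q \bigr),
\]
and the $\bar\Lambda$-term rewrites (using that $\bar\Lambda$ is diagonal so only the diagonal of $d\Lambda$ contributes) as $\text{tr}(\bar\Lambda\, Q^T dA\, Q)$. Combining and cycling the trace,
\[
 \text{tr}\bigl( Q \bigl( \bar\Lambda + (H \circ M)^T \bigr) Q^T \, dA \bigr) \;=\; \text{tr}(\bar A^T\, dA),
\]
so $\bar A = Q\bigl( \bar\Lambda + H \circ M \bigr) Q^T$ after transposing and using that $\bar\Lambda$ is symmetric and $H^T = -H$ combines with the transpose of $M$ to give the stated formula.

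The main obstacle is simply bookkeeping: keeping straight the interplay between the Hadamard product, matrix transposes, and the antisymmetry of both $C$ and $H$, and being careful that $\bar\Lambda$ (like $\Lambda$) is treated as diagonal so that the diagonal-only constraint $(P_L+P_R)\circ \Lambda = 0$ is consistent with the calculation. Nondegeneracy of the eigenvalues is used exactly once, to invert the map $C \mapsto \Lambda C - C\Lambda$ on the off-diagonal subspace via the weights $H_{ij}$; repeated eigenvalues would make some $H_{ij}$ undefined, which is why this pullback is only claimed for the distinct-eigenvalue case.
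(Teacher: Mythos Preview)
Your argument is correct and follows essentially the same route as the paper's own proof: differentiate the defining equations, use antisymmetry of $C=Q^T\,dQ$ to solve for $C$ via the Hadamard weights $H$, and then read off $\bar A$ from the trace identity. One small slip to fix in the write-up: after substituting $dQ=QC$ you should obtain $Q^T\,dA\,Q = d\Lambda + C\Lambda - \Lambda C$ (not $\Lambda C - C\Lambda$); you silently flip the sign again when solving for $C_{ij}$, so your final expression $C = H\circ(Q^T\,dA\,Q)$ and the resulting $\bar A$ are correct.
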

\begin{proof}
In Appendix \ref{appendix:eigh_pullback}.
\end{proof}

\subsection{Explicit algorithms}

\begin{algorithm2e}
\SetKwInOut{Input}{input}\SetKwInOut{Output}{output}
\SetLine 
\Input{$[Q]_{d} = [Q_0, \dots, Q_{d-1}]$ with $0 \stackrel{d}{=} [Q]_d^T [Q]_d - \Id$ }
\Input{$D \in \N$}
\Output{$[Q]_{D} = [Q_0, \dots, Q_{D-1}]$, where  $0 \stackrel{D}{=} [Q]_D^T [Q]_D - \Id$}
\BlankLine
\BlankLine
\For{$k = d$ \KwTo $D-1$}{
$Q_{k}  =-  \frac{1}{2}Q_0  \sum_{i =1}^{k-1} Q_i^T Q_{k-i}$
}
\caption{ This algorithm computes $[Q]_D = \mbox{qlift}([Q]_d,D)$ as described in Proposition \ref{lemma:qlift} using sequential Hensel-lifting ($E=1$).}\label{algo:qlift}
\end{algorithm2e}
\begin{algorithm2e}
\SetKwInOut{Input}{input}\SetKwInOut{Output}{output}

\SetLine 
\Input{$[A]_{D} = [A_0, \dots, A_{D-1}]$, where $A_d \in \R^{N \times N}$ symmetric positive definite, $d=0,\dots,D-1$ }
\Output{$[\Lambda]_{D} = [\Lambda_0, \dots, \Lambda_{D-1}]$, where $\Lambda_0 \in \R^{N \times N}$ diagonal and $\Lambda_d \in \R^{N \times N}$ block diagonal  $d=1,\dots,D-1$. }
\Output{$[Q]_{D} = [Q_0, \dots, Q_{D-1}]$ orthogonal, where $Q_d \in \R^{N \times N}$ }
\Output{$b \in \N^{N_b+1}$, array of integers defining the blocks. The integer $N_b$ is the number of blocks. Each block has the size of the multiplicity of an eigenvalue $\lambda_{n_b}$ of $\Lambda_0$ s.t. for $s = b_{n_b}:b_{n_b+1}-1$ one has $(Q_{0;:,s})^T A_0 Q_{0;:,s} = \lambda_{n_b} \Id$.}

\BlankLine
$ \Lambda_0, Q_0 = \eigh (A_0)$\\
\BlankLine
compute $b \in \R^{N_b + 1}$ \\
$E_{ij} = \Lambda_{0;jj} - \Lambda_{0;ii}$ \\
$H = P_b \circ (1/E)$ \\
\BlankLine
\For{$d = 1$ \KwTo $D-1$}{
$ \Delta F = \sum_{|i|=d} Q_{i_1}^T A_{i_2} Q_{i_3} $ \\
$ S = - \frac{1}{2} \sum_{k=1}^{d-1} Q^{T}_{d-k} Q_k $ \\
$ K = \Delta F +  Q_0^T A_d  Q_0 + S  \Lambda_0 +  \Lambda_0 S $\\
$ Q_d = Q_0 ( S + H \circ K)$\\
$ \Lambda_d = \bar P_b \circ K$
}
\caption{\label{algo:blockdiag} This algorithm computes $[\Lambda]_D, [Q]_D, b = \eigh_1([A]_D)$ as specified by \ref{prop:order_one_relaxed_problem} using sequential Hensel-lifting ($E=1$). I.e., the zeroth coefficient is diagonalized and the higher order coefficients are block diagonalized. The symbol $i \in \N_0^3$ denotes a multi-index, i.e., the summation $\sum_{|i|=d}$ goes over all possible $i$ such that $|i| \equiv \sum_{k=1}^3 i_k = d$. }
\end{algorithm2e}
\begin{algorithm2e}
\SetKwInOut{Input}{input}\SetKwInOut{Output}{output}
\SetLine 
\Input{$[A]_{D} = [A_0, \dots, A_{D-1}]$ symmetric with $A_d \in \R^{N \times N}$ }
\Output{$[\Lambda]_{D} = [\Lambda_0, \dots, \Lambda_{D-1}]$, where $\Lambda_d \in \R^{N \times N}$ diagonal for $d=0,\dots,D-1$. }
\Output{$[Q]_{D} = [Q_0, \dots, Q_{D-1}]$ orthogonal, where $Q_d \in \R^{N \times N}$ }
\BlankLine
$[\Lambda^0]_D = [A]_D$, $[Q^0]_D = \Id$ and $b^0 = [1,N+1]$\\
\For{$d = 0$ \KwTo $D-1$}{
\For{$n_b = 1$ \KwTo $N_b^d$}{
$s = b^d_{n_b} : b^d_{n_b+1} -1$ \quad (slice index)\\
$[\hat \Lambda_{s,s}]_{D-d}, [\hat Q_{s,s}]_{D-d}, b^{d+1} = \eigh_1( [\Lambda^d_{s,s}]_{d:})$\\
$[Q_{s,s}]_D = \mbox{qlift}([\hat Q_{s,s}]_{D-d}, D)$
}
$[\Lambda^{d+1}]_D = [\Lambda^d]_d + [\hat \Lambda]_{D-d} T^d $\\
$[Q^{d+1}]_D = [Q^d]_D [Q]_D$
}
\caption{This algorithm computes $[\Lambda]_D, [Q]_D = \eigh([A]_D)$ as described in Theorem~\ref{thm:eigh}. The algorithm uses internally Algorithm \ref{algo:blockdiag} and \ref{algo:qlift}.}\label{algo:eigh} 
\end{algorithm2e}

\pagebreak[6]

\section{Numerical tests and examples} \label{sec:numerical_results}
\subsection{Taylor polynomial arithmetic on real symmetric eigenvalue problem}
As an example to test the validity of the pushforward in UTP arithmetic we consider the following system \cite{andrew:98}:
\begin{eqnarray*}
Q(t) &=& \frac{1}{\sqrt{3}} 
\begin{pmatrix}
\cos(x(t)) & 1 & \sin(x(t)) & -1 \\
-\sin(x(t)) & -1 & \cos(x(t)) & -1 \\
1 & - \sin(x(t)) & 1 & \cos(x(t)) \\
-1 & \cos(x(t)) & 1 & \sin(x(t)) \\
\end{pmatrix} \\
\Lambda(t) &=& \diag (x^2 - x + \frac{1}{2},  4 x^2 - 3 x, \delta ( -\frac{1}{2} x^3 + 2 x^2 - \frac{3}{2} x + 1) + ( x^3 + x^2 - 1), 3x - 1  ) \;,
\end{eqnarray*}
where $x \equiv x(t) := 1 + t$. The constant $\delta$ is some predefined constant. In Taylor arithmetic one obtains
\begin{eqnarray*}
\Lambda_0 &=& \diag (1/2, 1, 1+ \delta ,2 )\\
\Lambda_1 &=& \diag (1, 5, 5+\delta, 3) \\
\Lambda_2 &=& \diag (2, 8, 8 + \delta , 0) \\
\Lambda_3 &=& \diag (0,0,6 - 3 \delta,  0) \\
\Lambda_d &=& \diag (0,0, 0, 0), \quad \forall d \geq 4 \;.
\end{eqnarray*}
One can see that in the case $\delta = 0$ one obtains one repeated eigenvalue that splits at $d=3$.
We apply Algorithm \ref{algo:eigh} to reconstruct $[\Lambda]_D$. The reconstructed values are denoted $[\tilde \Lambda]_D$.
The numerical results are shown in \Fig{fig:andrew_test_example}.

\begin{figure}[!ht]
\centering
\includegraphics[width=0.495\textwidth]{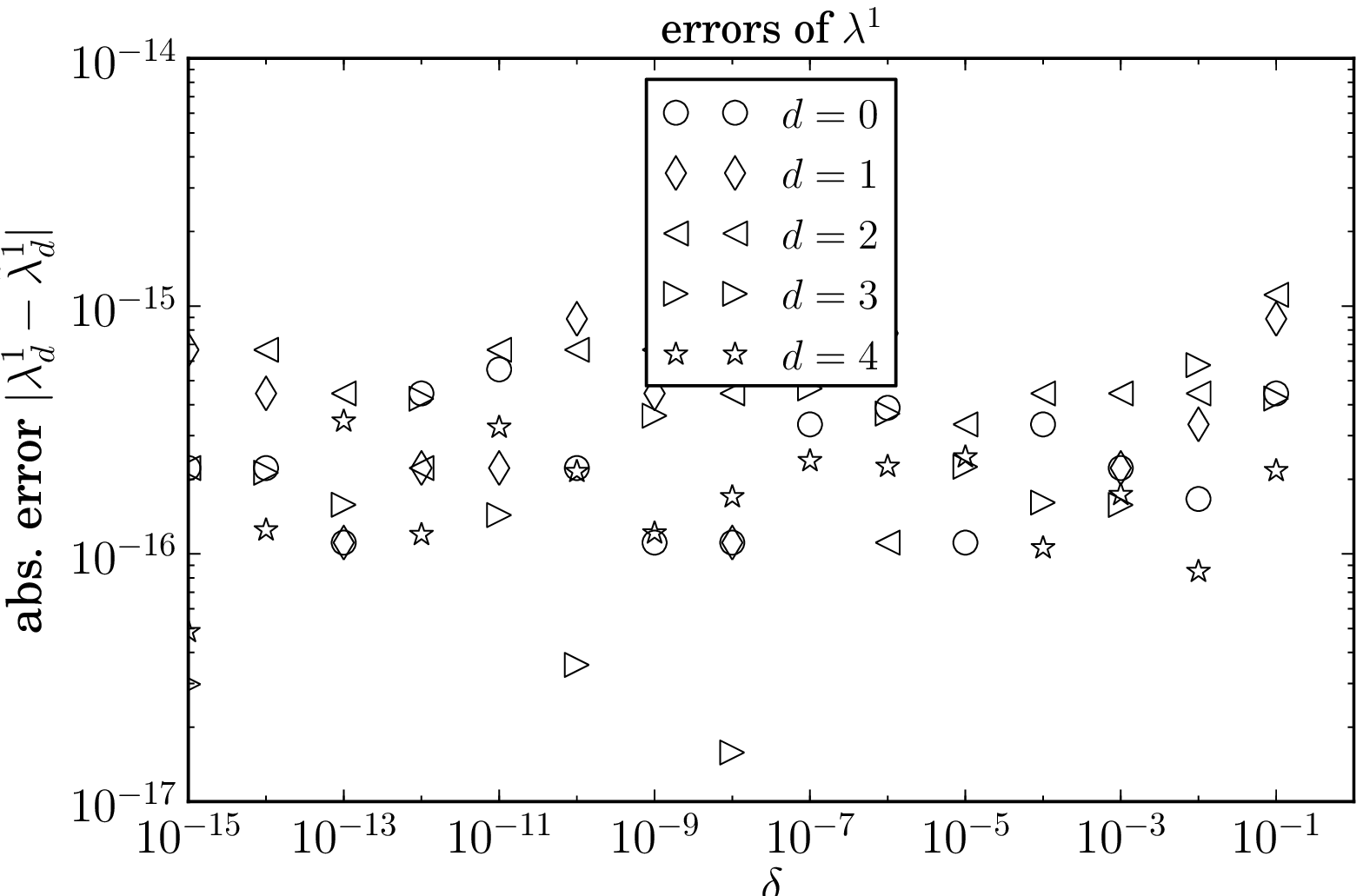}
\includegraphics[width=0.495\textwidth]{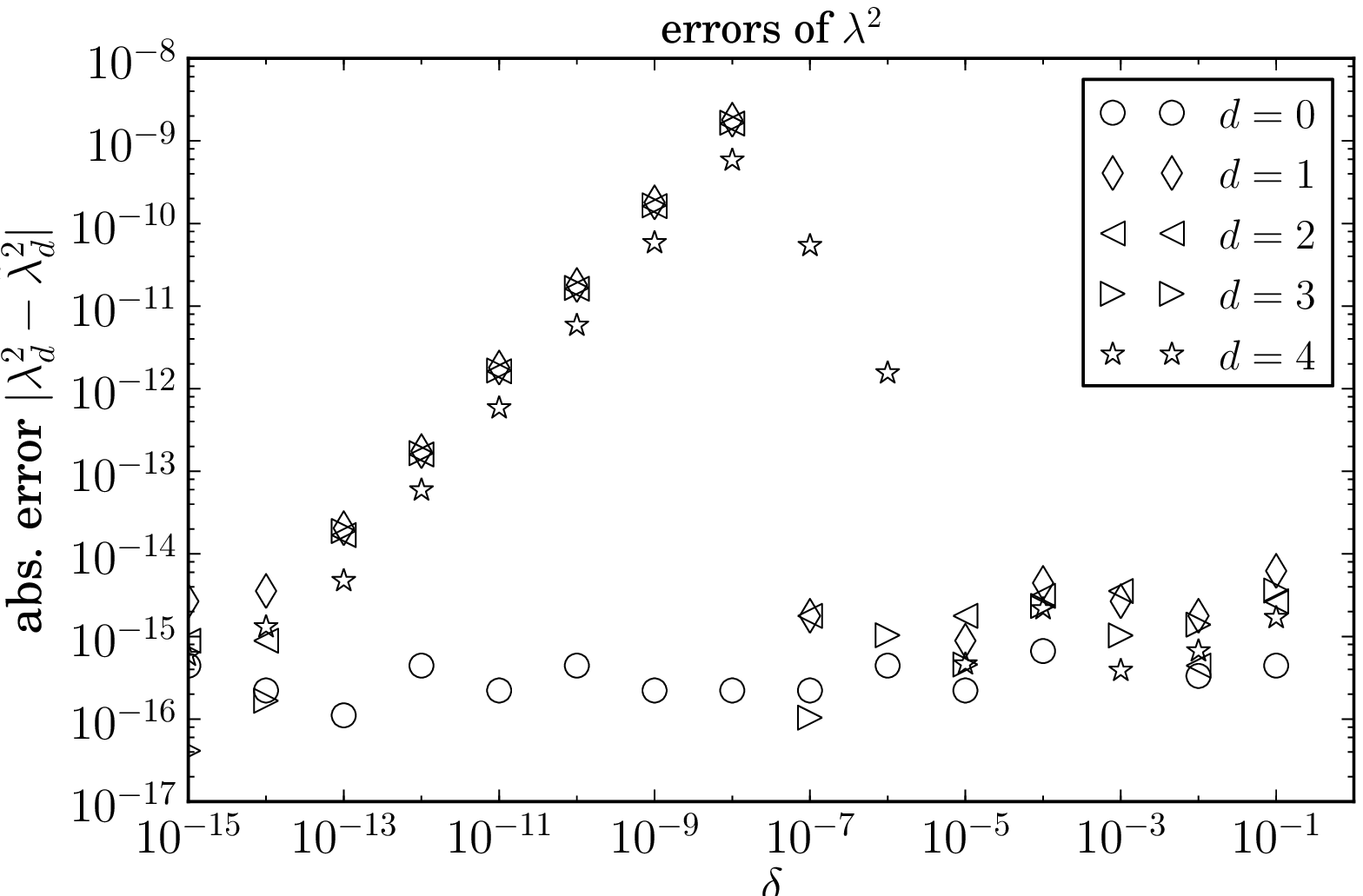}

\caption{\label{fig:andrew_test_example} One left side one can see that error between the true $\lambda_1$ and reconstructed $\tilde \lambda_1$ is close to machine precision. On the right side one can see that the absolute error $| \tilde \lambda_2 - \lambda_2|$ has a jump at $\delta \approx 10^{-7}$. This is due to the fact that that the algorithm treats eigenvalues $|\lambda_i - \lambda_j| < 10^{-7}$ as repeated eigenvalues. One can see that when $\delta$ approaches $10^{-16}$ the error gets smaller. The eigenvalue $\lambda_4$ shows the same qualitative behavior as $\lambda_1$ and $\lambda_3$ the same as $\lambda_2$.}
\end{figure}

\subsection{Covariance matrix computation}
To test the validity of the covariance matrix computation of \Eqn{eqn:covariance_matrix} we first check that the first directional derivatives of the covariance matrix $C$ w.r.t. $J_1$ and $J_2$ coincide with the results from the complex-step derivative approximation, abbreviated here for convenience as CSDA. The CSDA computes directional derivatives of a real-valued function $y = f(x)$ as $\dot y \approx \frac{\Im(f(x+i\epsilon \dot x))}{\epsilon} = \frac{f(x + i \epsilon \dot x) - f(x - i \epsilon \dot x)}{2i\epsilon} $, i.e., $\Im$ extracts the imaginary part and $i \equiv \sqrt{-1}$. The number $\epsilon \in \R$ can be made very small. For a detailed discussion that also shows the relation to AD see Martins et al. \cite{Martins:2001, Martins:2003}. Having verified the first order derivatives by UTP arithmetic we can check if the UTP arithmetic on \Eqn{eqn:cov_qr} yields the same result. Unfortunately, it is not possible to use the CSDA in a straight-forward fashion since for complex matrices the QR decomposition does not yield an orthogonal but a unitary $Q$. For reproducibility we define $J_1$ and $J_2$ rather arbitrarily as
\begin{eqnarray*}
J_1(x) &=& 
\begin{pmatrix}
\sin(x_1) x_2 & \cos(x_2) \\
\exp(x_1) & x_1 x_2 \\
x_1 \log(x_2) & \log( 1 + \exp(\cos(x_1))) \\
x_2 + x_1 & x_1 (x_2 + \cos(x_1) \\
\end{pmatrix} \;, \quad \; J_2(x)^T =
\begin{pmatrix}
x_1 \log(x_2 + 3 \sin(x_1x_2))\\ x_2 \exp( \sin(x_1) + \cos(x_1 x_2 ))
\end{pmatrix} \;.
\end{eqnarray*}
The numerical results are shown in Figure \ref{fig:cov}. Note that $x_1$ and $x_2$ are here elements of the vector $x$ and not coefficients.

\begin{figure}[!ht]
\centering
\includegraphics[width=0.495\textwidth]{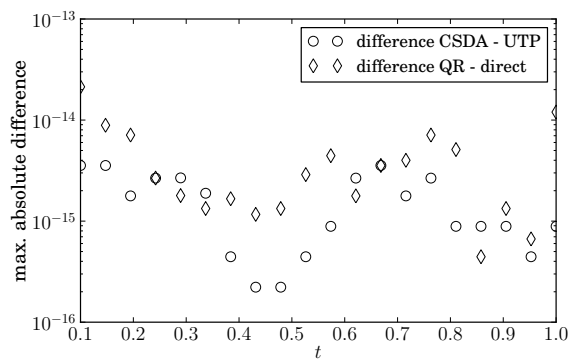}

\caption{\label{fig:cov} This plot shows the maximum absolute differences of the directional derivatives at  $ x = t(3,1)^T$, where $t \in [0.1, 1]$ in direction $\dot x = (5,7)^T$. The circles show the difference between the CSDA solution and the first order UTP solution using \Eqn{eqn:covariance_matrix}. The diamonds show the difference between the UTP solution of \Eqn{eqn:covariance_matrix} and \Eqn{eqn:cov_qr}. One can see that the difference is close to machine precision of IEEE 754 float64, which is approximately $10^{-16}$.}
\end{figure}

\section{Summary and outlook}
We have shown how computer codes containing real symmetric eigenvalue decompositions and QR decompositions can be evaluated in univariate Taylor polynomial arithmetic. Furthermore, the reverse mode of AD has been treated. Explicit algorithms have been presented that can be used in combination with existing AD software, e.g. general purpose AD tools like ADOL-C \cite{Griewank1999ACA} or CppAD \cite{Bell:2010} but also differentiated DAE solvers like SolvIND \cite{SOLVIND-hp}.
Numerical tests have been used to check the algorithms.

Other algorithms that contain the the QR decomposition and the real symmetric eigenvalue decompostion can be differentiated using the shown algorithms. In particular, we think of the differentiation of the SVD or the computation of pseudoinverses. We believe that these algorithms, in modified form, may also be valuable for the eigenvalue optimization problem where eigenvalues are repeated in the solution point. 

\section*{Acknowledgements}
The authors wish to thank Bruce Christianson for his comments that helped to gain a deeper understanding of the matter and also greatly helped to improve the readability of this work.

This research was partially supported by the Bundesministerium f\"ur Bildung und Forschung (BMBF) within the project NOVOEXP (Numerische Optimierungsverfahren f\"ur die Parametersch\"atzung und den Entwurf optimaler Experimente unter Ber\"ucksichtigung von Unsicherheiten f\"ur die Modellvalidierung verfahrenstechnischer Prozesse der Chemie und Biotechnologie) (03GRPAL3), Humboldt Universit\"at zu Berlin.

\appendix
\addcontentsline{toc}{section}{Appendices}
\section{Additional proofs}
\subsection{Proofs of QR decomposition} 
\subsubsection{Proof of  Proposition \ref{prop:qr}}\label{appendix:utpm_qr}
\begin{proof}
We look at the first defining equation and try to separate the known from the unknown quantities:
\begin{eqnarray}
0 &\eqDE& [Q]_{D+E} [R]_{D+E} - [A]_{D+E} \nonumber \\
&\eqDE& ([Q]_D + [\Delta Q]_E T^D) ( [R]_D + [\Delta R]_E T^D) - [A]_{D+E} \nonumber \\
&\eqDE& [Q]_D [R]_D - [A]_{D+E}  + ( [\Delta Q]_E [R]_D + [Q]_D [\Delta R]_E ) T^D \nonumber \\
&\eqDE& -[\Delta F]_E T^D + ( [\Delta Q]_E [R]_D + [Q]_D [\Delta R]_E ) T^D \nonumber \\
&\eqE& -[\Delta F]_E +  [\Delta Q]_E [R]_E + [Q]_E [\Delta R]_E \label{eqn:qrlift1}\;.
\end{eqnarray}
Similarly for the second defining equation
\begin{eqnarray*}
0 & \eqDE& [Q]^T_{D+E} [Q]_{D+E} - \Id \\
  & \eqDE& [Q]^T_{D} [Q]_{D} - \Id + ( [Q]^T_{D} [\Delta Q]_{E} +  [\Delta Q]^T_{E} [Q]_{D}) T^D \\
\Rightarrow 0  & \eqE & -[\Delta G]_E +  [Q]^T_{E} [\Delta Q]_{E} +  [\Delta Q]^T_{E} [Q]_{E} \\
 &\eqE&  -[\Delta G]_E + [S]_E + [X]_E + [S]_E - [X]_E \\
\Rightarrow  \quad S &=&  \frac{1}{2} [\Delta G]_E \;,
\end{eqnarray*}
where $[S]_E + [X]_E = [Q]_E^T [\Delta Q]_E$ and it has been used that every matrix can be written as the sum of a symmetric and an antisymmetric matrix.
Now multiply (\ref{eqn:qrlift1}) by $[Q]^T_E$ from the left to obtain
\begin{eqnarray}
0 & \eqE & - [Q]^T_E  [\Delta F]_E + [Q]^T_E [\Delta Q]_E [R]_E + [\Delta R]_E \label{eqn:qrlift2}\\
 & \eqE & - [Q]^T_E  [\Delta F]_E + ([S]_E + [X]_E) [R]_E + [\Delta R]_E \label{eqn:qrlift21} \\
 & \eqE & - [Q]^T_E  [\Delta F]_E + [S]_E [R]_E + [X]_E [R]_E + [\Delta R]_E \nonumber \;.
\end{eqnarray}
Multiplication of $[R_{:N,:}]_E^{-1}$ from the right yields
\begin{eqnarray*}
0 & \eqE &  - [Q]^T_E  [\Delta F]_E  [R_{:N,:}]_E^{-1} + [S]_E [R]_E [R_{:N,:}]_E^{-1}+ [X]_E [R]_E [R_{:N,:}]_E^{-1} + [\Delta R]_E [R_{:N,:}]_E^{-1} \\
& \eqE & - [Q]^T_E  [\Delta F]_E  [R_{:N,:}]_E^{-1} + [S_{:,:N}]_E + [X_{:,:N}]_E  + [\Delta R]_E [R_{:N,:}]_E \\
\Rightarrow   P_L \circ ( [X_{:,:N}]_E) &\eqE& P_L \circ \left( [Q]_E^T [\Delta F]_E [R_{:N,:}]_E^{-1} - [S_{:,:N}]_E \right) \;.
\end{eqnarray*}
The coefficients of $X_{:,N+1:}$ are not specified and can for instance be set to zero. Since $X$ is antisymmetric it is already defined by the above equation. Since $[S]_E + [X]_E \eqE [Q]_E^T [\Delta Q]_E$ one can obtain $[\Delta Q]_E$ as
\begin{eqnarray*}
[\Delta Q]_E &=& [Q]_E ( [S]_E + [X]_E) 
\end{eqnarray*}
because for quadratic $Q$ one has the identity $Q Q^T = \Id$.
\end{proof}

\subsubsection{Proof of Proposition \ref{prop:pullback_qr}}\label{appendix:reverse_qr}
\begin{proof}
We differentiate the implicit system
\begin{eqnarray*}
0 &=& A - Q R \\
0 &=& Q^T Q - \Id \\
0 &=& P_L \circ R
\end{eqnarray*}
and obtain
\begin{eqnarray*}
0 &=& \dd A - \dd Q R - Q \dd R  \quad (*)\\
0 &=& \dd Q^T Q + Q^T \dd Q \quad (**)\;.
\end{eqnarray*}
We define the antisymmetric ``matrix'' $ X := Q^T \dd Q$. Multiplication of Eqn. (*) from the left with $Q^T$ yields
\begin{eqnarray*}
0 &=& Q^T \dd A - XR - \dd R \\
\mbox{hence} \quad \dd R &=& Q^T \dd A - X  R \;.
\end{eqnarray*}
The multipication of this last equation from the right with the Moore-Penrose pseudoinverse $R^+=(R_{:N,:}{}^{-1},0)$ yields the equivalent equation
\begin{eqnarray*}
0 &=& Q^T \dd A R^+ - X R R^+ - \dd R R^+ \\
\mbox{ and thus } \; P_L \circ X &=& P_L \circ( Q^T \dd A R^+ ) \;,
\end{eqnarray*}
where we have chosen arbitrarily that $X_{:,N+1:} = 0$. Since $X$ is antisymmetric we have
\begin{eqnarray*}
X &=& (P_L \circ X) - (P_L \circ X)^T \;.
\end{eqnarray*}
We can use these results to compute the pullback:
\begin{align*}
\tr( \bar R^T \dd R) + \tr( \bar Q^T \dd Q) 
&= \tr ( Q \bar R \dd A^T) - \tr( R \bar R^T X) + \tr( \bar Q^T Q Q^T \dd Q) \\
&= \tr ( Q \bar R \dd A^T) + \tr ( \underbrace{( \bar Q^T Q - R \bar R^T)}_{=:K} X) \\
&= \tr ( Q \bar R \dd A^T) + \tr ( ( K - K^T) ( P_L \circ X)) \\
&= \tr ( Q \bar R \dd A^T) + \tr ( R^{+T} \dd A^T Q ( P_L \circ ( K^T - K))) \\
&= \tr ( Q [ \bar R + \{ P_L \circ ( Q^T \bar Q - \bar Q^T Q + R \bar R^T - \bar R R^T) \} R^{+T}] \dd A^T)\\
&= \tr ( \bar A \dd A^T) \;.
\end{align*}
In the above derivation we have used  Lemmas \ref{lemma_anti_symmetric}, \ref{lemma_transpose_projector} and \ref{lemma_abc}.

\end{proof}

\subsubsection{Proof of Lemma \ref{lemma:qlift}} \label{appendix:qlift}
\begin{proof}
\begin{eqnarray*}
0 &\eqDE& ([Q]_D + [\Delta Q]_E T^D)^T ([Q]_D + [\Delta Q]_E T^D) - \Id\\
&\eqDE& ([Q]_D^T [Q]_D - \Id) + ([Q]_D^T [\Delta Q]_E + [\Delta Q]_E^T [Q]_D) T^D \\
&\eqE& [\Delta G]_E + [Q]_E^T [\Delta Q]_E + [\Delta Q]_E^T [Q]_E \\
&\eqE&  [\Delta G]_E + 2 [S]_E \\
\; [\Delta Q]_E &\eqE& - \frac{1}{2} [Q]_E [\Delta G]_E\;,
\end{eqnarray*}
where $ [\Delta Q]_E^T [Q]_E = [S]_E + [X]_E$, $[S]_E$ symmetric and $[X]_E$ antisymmetric and $[\Delta G_E]_E T^D \stackrel{D+E}{=} \left( Q^T Q - \Id \right)$ . Since no condition defines constraints on $[X]_E$ it has been set to zero.
\end{proof}

\subsubsection{Proof of Proposition \ref{prop:eigh_pullback}} \label{appendix:eigh_pullback}
\begin{proof}
We want to compute $ \tr( \bar A^T \dd A) = \tr( \bar \Lambda^T \dd \Lambda) + \tr( \bar Q^T \dd Q)$. We differentiate the implicit system
\begin{eqnarray*}
0 &=& Q^T A Q - \Lambda \\
0 &=& Q^T Q - \Id\\
0 &=& (P_L + P_R) \circ \Lambda
\end{eqnarray*}
and obtain
\begin{eqnarray*}
\dd \Lambda &=& Q^T \dd A Q + \dd Q^T A Q + Q^T A \dd Q \\
	&=&Q^T \dd A Q + \dd Q^T Q\Lambda + \Lambda Q^T\dd Q \\
0 &=& \dd Q^T Q + Q^T \dd Q \;.
\end{eqnarray*}
A straight forward calculation shows:
\begin{eqnarray*}
\tr ( \bar \Lambda^T \dd \Lambda) 
%&=& \tr( A Q \bar \Lambda^T \dd Q^T) + \tr( \Lambda Q^T A \dd Q) + \tr ( Q \bar \Lambda Q^T \dd A) \\
%&=& \tr ( Q^T A Q \bar \Lambda^T \dd Q^T Q) + \tr ( \bar \Lambda Q^T A Q Q^T \dd Q)  + \tr ( Q \bar \Lambda Q^T \dd A) \\
&=& \tr ( Q \bar \Lambda Q^T \dd A) + \tr ( \Lambda \bar \Lambda \dd Q^T Q ) + \tr( \bar \Lambda \Lambda Q^T \dd Q) \\
&=& \tr ( Q \bar \Lambda Q^T \dd A) \;, \\
\tr( \bar Q^T \dd Q) 
%&=& ( \bar Q^T Q Q^T \dd Q) \\
&=& \tr (\bar Q^T Q (H \circ (Q^T \dd A Q))) \\
%&=& \tr ( Q^T \dd A^T Q ( H \circ (Q^T \bar Q))) \\
&=& \tr ( Q ( H^T \circ (\bar Q^T Q)) Q^T \dd A) \;, \\
\tr ( \bar A^T \dd  A) &=& \tr \left( ( Q (\bar \Lambda + H^T \circ ( \bar Q^T Q)) Q^T )\dd A \right)
\end{eqnarray*}
where we have used
\begin{eqnarray*}
\dd \Lambda 
  &=&Q^T \dd A Q - (Q^T\dd Q)^T\Lambda + \Lambda Q^T\dd Q \\
  &=&Q^T \dd A Q - K \circ (Q^T \dd Q)\\
\implies
  Q^T \dd Q &=& H \circ ( Q^T \dd A Q - \dd \Lambda) \\
  &=& H \circ ( Q^T \dd A Q)
\end{eqnarray*}
where we have defined $K_{ij} := \Lambda_{jj} - \Lambda_{ii}$ and $H_{ij} = (K_{ij})^{-1}$ for $i \neq j$ and $H_{ij} = 0$ otherwise and used the property $X \Lambda - \Lambda X  = K \circ X$ for all $X \in \R^{N \times N}$ and diagonal $\Lambda \in \R^{N \times N}$.
\end{proof}

\subsection{Basic results used in the proofs}

\begin{lemma}\label{lemma_anti_symmetric}
Let $X \in \R^{N \times N}$ be an antisymmetric matrix, i.e., $X^T = - X$ and $P_L$ defined as above. We then can write
\begin{eqnarray}
X &=& P_L \circ X - (P_L \circ X)^T \;.
\end{eqnarray}
\end{lemma}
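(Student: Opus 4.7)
The proof is essentially bookkeeping on the entries of $X$, so I would keep it short and index-based.

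First I would observe that antisymmetry forces the diagonal of $X$ to vanish: $X_{ii} = -X_{ii}$ implies $X_{ii} = 0$. Hence $X$ decomposes into its strictly lower and strictly upper triangular parts only, i.e.\ $X = (P_L \circ X) + (P_R \circ X)$, where $(P_R)_{ij} = \delta_{i<j}$ is the strict upper indicator.

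Next I would unpack $(P_L \circ X)^T$ entrywise. By definition, $(P_L \circ X)_{ij} = \delta_{j<i} X_{ij}$, so
\[
\bigl((P_L \circ X)^T\bigr)_{ij} = (P_L \circ X)_{ji} = \delta_{i<j} X_{ji}.
\]
Applying antisymmetry ($X_{ji} = -X_{ij}$) gives $\bigl((P_L \circ X)^T\bigr)_{ij} = -\delta_{i<j} X_{ij}$, i.e.\ $-(P_L \circ X)^T = P_R \circ X$.

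Combining the two observations yields
\[
P_L \circ X - (P_L \circ X)^T = (P_L \circ X) + (P_R \circ X) = X,
\]
which is the claimed identity. There is no genuine obstacle here; the only point to watch is getting the strict inequalities right so that the diagonal is excluded from both the lower and upper parts, consistent with $X_{ii} = 0$.
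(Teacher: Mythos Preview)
Your proof is correct and follows essentially the same route as the paper: decompose $X$ into its strictly lower and strictly upper triangular parts (using that the diagonal vanishes by antisymmetry), then identify $P_R \circ X$ with $-(P_L \circ X)^T$. The paper writes this identification as $P_R \circ X = (P_L \circ X^T)^T = -(P_L \circ X)^T$ via the transpose identity $(P_L \circ A)^T = P_R \circ A^T$, whereas you verify it directly entrywise, but the content is the same.
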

\begin{proof}
$
X = P_L \circ X + P_R \circ X = P_L \circ X + (P_L \circ X^T)^T = P_L \circ X - (P_L \circ X)^T
$
\end{proof}
\begin{lemma}\label{lemma_transpose_projector}
Let $A \in \R^{N \times N}$ and $P_L$ resp. $P_R$ defined as above. Then
\begin{eqnarray}
(P_L \circ A)^T &=& P_R \circ A^T \;.
\end{eqnarray}
\end{lemma}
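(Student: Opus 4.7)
The plan is to verify the identity entrywise, since both sides are defined pointwise via the Hadamard product and transpose, and no structural properties of $A$ are used. First I would observe that the skeletal projectors satisfy $P_L^T = P_R$: by definition $(P_L^T)_{ij} = (P_L)_{ji} = \delta_{i<j} = (P_R)_{ij}$. This is the key observation, because then the result essentially reduces to the fact that transposition distributes over the Hadamard product, i.e. $(B \circ A)^T = B^T \circ A^T$ for any matrices $B, A$ of the same shape.

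Concretely, I would compute $((P_L \circ A)^T)_{ij} = (P_L \circ A)_{ji} = (P_L)_{ji} A_{ji} = \delta_{i<j} A_{ji}$, and compare with $(P_R \circ A^T)_{ij} = (P_R)_{ij} (A^T)_{ij} = \delta_{i<j} A_{ji}$. The two agree for all $i,j$, which gives the claim. There is no real obstacle here; the only thing one has to be careful about is keeping track of index swaps in the transpose and in the indicator $\delta_{j<i}$ versus $\delta_{i<j}$.

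Since the statement is entirely elementary, I would keep the written proof to two short displayed calculations plus a one-line remark that $P_L^T = P_R$, and then invoke the distributivity of transpose over the Hadamard product to conclude.
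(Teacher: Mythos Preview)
Your proposal is correct and follows essentially the same entrywise verification as the paper's proof; the paper computes $(P_L \circ A)^T_{ij} = A_{ji}\,\delta_{j>i}$ and identifies this with $(P_R \circ A^T)_{ij}$. Your additional remark that $P_L^T = P_R$ and that transposition distributes over the Hadamard product is a clean way to phrase the same computation.
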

\begin{proof}
$B_{ij} := (P_L \circ A)_{ij} = A_{ij} (i > j)$ and $B_{ij}^T = B_{ji} = A_{ji} (j>i) = A_{ij}^T P_R = P_R \circ A
$
\end{proof}

\begin{lemma}\label{lemma_abc}
Let $A,B,C \in \R^{M \times N}$. We then have
\begin{eqnarray}
\tr \left( A^T ( B \circ C)  \right) &=& \tr \left( C^T ( B \circ A) \right)
\end{eqnarray}
\end{lemma}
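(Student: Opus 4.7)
The plan is to reduce both sides to the same symmetric triple sum $\sum_{i,j} A_{ij} B_{ij} C_{ij}$ over the entries of the three matrices, exploiting the commutativity of real multiplication. This is essentially an entrywise computation rather than a matrix-calculus argument, so there is nothing to invoke beyond the definitions.

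First I would recall the standard identity that for any $X, Y \in \R^{M \times N}$ one has
\[
\tr(X^T Y) \;=\; \sum_{i=1}^{M} \sum_{j=1}^{N} X_{ij}\, Y_{ij},
\]
which follows immediately from $(X^T Y)_{jj} = \sum_i X_{ij} Y_{ij}$ and summing over $j$. Setting $X = A$, $Y = B \circ C$, and using the entrywise definition $(B \circ C)_{ij} = B_{ij} C_{ij}$, the left-hand side becomes $\sum_{i,j} A_{ij} B_{ij} C_{ij}$.

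Applying the same identity to the right-hand side with $X = C$ and $Y = B \circ A$ yields $\sum_{i,j} C_{ij} B_{ij} A_{ij}$, and commutativity of real multiplication identifies the two sums. No substantive obstacle arises: the lemma is a purely combinatorial fact about the invariance of the scalar expression $A_{ij} B_{ij} C_{ij}$ under permutations of its three factors, and the only ingredient beyond this is the elementary Frobenius formula for the trace.
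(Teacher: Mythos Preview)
Your proof is correct and follows exactly the same approach as the paper: both reduce the two sides to the common entrywise sum $\sum_{i,j} A_{ij} B_{ij} C_{ij}$ via the Frobenius trace identity and the definition of the Hadamard product. Your version is simply a slightly more explicit rendering of the paper's one-line computation.
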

\begin{proof}
$\tr ( A^T (B \circ C)) = \sum_{i = 1}^N \sum_{j  = 1}^M A_{ij} B_{ij} C_{ij}= \tr ( C^T ( B \circ A ))$
\end{proof}

\begin{lemma} \label{lemma:low_trig_AB}
Let $A,B$ be lower triangular matrices. Then the following expression holds:
\begin{eqnarray}
P_D \circ ( A B) &=& (P_D \circ A) ( P_D \circ B) \;. 
\end{eqnarray}
\end{lemma}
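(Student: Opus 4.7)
The plan is to verify the identity entry by entry, exploiting the fact that $P_D$ projects onto the diagonal (that is, $(P_D)_{ij}=\delta_{ij}$), so that both sides are diagonal matrices, and it suffices to compare their $(i,i)$ entries for arbitrary $i$.

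First I would expand the $(i,i)$ entry of the matrix product as
\[
(AB)_{ii} \;=\; \sum_{k} A_{ik} B_{ki}.
\]
Then I would invoke the lower-triangular hypothesis on the two factors separately: $A_{ik}=0$ whenever $k>i$, and $B_{ki}=0$ whenever $k<i$. Consequently the only index $k$ that can contribute a nonzero summand is $k=i$, and the sum collapses to $A_{ii}B_{ii}$. Thus $\bigl(P_D\circ(AB)\bigr)_{ii}=A_{ii}B_{ii}$, and of course the off-diagonal entries of $P_D\circ(AB)$ vanish by the definition of $P_D$.

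Next I would compute the right-hand side: $P_D\circ A$ and $P_D\circ B$ are the diagonal matrices $\diag(A_{11},\dots,A_{NN})$ and $\diag(B_{11},\dots,B_{NN})$, so their product is the diagonal matrix whose $(i,i)$ entry is exactly $A_{ii}B_{ii}$. Matching the two diagonal matrices entry-wise completes the proof.

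I do not expect any genuine obstacle here; the only point deserving care is the index-range argument that forces $k=i$ in the sum, which is the single substantive observation of the proof. No triangularity of the product itself needs to be invoked, and no structural result from earlier in the paper is required.
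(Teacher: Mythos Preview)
Your proof is correct and is essentially the same as the paper's: both reduce to the observation that the $(i,i)$ entry of $AB$ is $A_{ii}B_{ii}$ when $A$ and $B$ are lower triangular, so that $P_D\circ(AB)=\diag(A_{ii}B_{ii})=(P_D\circ A)(P_D\circ B)$. The paper phrases this by first noting that $AB$ is lower triangular and then reading off its diagonal, whereas you compute $(AB)_{ii}$ directly via the index-range argument; the content is identical.
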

\begin{proof}
$AB$ is also lower trinangular and thus 
$P_D \circ (AB)=diag(a_{ii}b_{ii})=diag(a_{ii})diag(b_{ii})=(P_D \circ A) (P_D \circ B)$
\end{proof}

\begin{lemma} \label{lemma:low_trig_AT}
The formula
\begin{eqnarray}
P_D \circ (A^T) &=& P_D \circ A
\end{eqnarray}
holds for all quadratic matrices $A$.
\end{lemma}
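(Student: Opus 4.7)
The plan is very short because the statement is essentially tautological once one unpacks the definitions of the skeletal projectors used throughout the paper. Following the convention established with $P_L$ and $P_R$ (whose $(i,j)$-entries are $\delta_{j<i}$ and $\delta_{i<j}$), the matrix $P_D$ is the diagonal indicator, $(P_D)_{ij} = \delta_{ij}$, so that the Hadamard product $P_D \circ A$ just zeros out every off-diagonal entry of $A$ and leaves the diagonal untouched.

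With this in hand, I would simply chase the indices. For any $i,j$, $(P_D \circ A^T)_{ij} = \delta_{ij}\,(A^T)_{ij} = \delta_{ij}\,A_{ji}$. When $i\ne j$ this is $0$, and when $i = j$ this equals $A_{ii} = \delta_{ii} A_{ii} = \delta_{ij} A_{ij} = (P_D \circ A)_{ij}$. Hence the two matrices agree entrywise, which is exactly the claimed equality.

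The underlying reason is that $P_D$ is itself a symmetric $0/1$ mask (invariant under transposition) concentrated on the diagonal, and transposition fixes diagonal entries pointwise; consequently $P_D \circ A^T = (P_D \circ A)^T = P_D \circ A$, where the last equality uses that a diagonal matrix is its own transpose. Since the statement is manifestly an entrywise identity and there is no structural obstruction, I do not expect any hard step in the argument — the only thing to be careful about is stating explicitly the definition of $P_D$, because the paper introduces $P_L$ and $P_R$ explicitly but uses $P_D$ only implicitly (e.g.\ in Lemma \ref{lemma:low_trig_AB}).
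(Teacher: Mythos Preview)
Your proof is correct and follows essentially the same entrywise argument as the paper: the paper's proof is the single line $(P_D \circ (A^T))_{ij} = \delta_{ij} A_{ji} = \delta_{ij} A_{ij} = (P_D \circ A)_{ij}$, which is exactly your index chase. Your additional remark that $P_D \circ A^T = (P_D \circ A)^T = P_D \circ A$ is a nice alternative phrasing, but not a different route.
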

\begin{proof}
$(P_D \circ (A^T))_{ij} = \delta_{ij} A_{ji} = \delta_{ij} A_{ij} = (P_d \circ A)_{ij}$
\end{proof}

\begin{lemma} \label{lemma:low_trig_Ainv}
Let $A$ be a nonsingular quadratic lower triangular matrix. Then the formula
\begin{eqnarray}
P_D \circ (A^{-1}) &=& (P_D \circ A)^{-1}
\end{eqnarray}
holds.
\end{lemma}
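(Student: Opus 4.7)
My plan is to reduce the claim to a one-line consequence of Lemma \ref{lemma:low_trig_AB} applied to $A$ and $A^{-1}$, together with the standard fact that the inverse of a nonsingular lower triangular matrix is again lower triangular.

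First I would establish that $A^{-1}$ is lower triangular. This is standard, but in the spirit of the paper I can give a self-contained argument: since $A$ is nonsingular and lower triangular, all diagonal entries $A_{ii}$ are nonzero, and solving $A X = \Id$ by forward substitution column-by-column shows that the $k$-th column of $X = A^{-1}$ has zeros above row $k$. Hence both $A$ and $A^{-1}$ lie in the class to which Lemma \ref{lemma:low_trig_AB} applies.

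Next I would apply Lemma \ref{lemma:low_trig_AB} to the product $A A^{-1} = \Id$:
\begin{equation*}
\Id \;=\; P_D \circ \Id \;=\; P_D \circ (A A^{-1}) \;=\; (P_D \circ A)(P_D \circ A^{-1}).
\end{equation*}
Since $P_D \circ A = \diag(A_{11},\dots,A_{NN})$ is a diagonal matrix with nonzero diagonal (as $A$ is nonsingular), it is itself invertible, and multiplying the identity above on the left by $(P_D \circ A)^{-1}$ yields $(P_D \circ A)^{-1} = P_D \circ A^{-1}$, which is the claim.

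I do not expect any real obstacle: the only non-trivial ingredient is the triangularity of $A^{-1}$, and once that is in hand the result is a direct algebraic consequence of the previously established Hadamard/product identity. An alternative, essentially equivalent, route would be to observe directly from the forward-substitution formula that $(A^{-1})_{ii} = A_{ii}^{-1}$ and compare both sides entry-wise, but routing through Lemma \ref{lemma:low_trig_AB} is cleaner and keeps the proof consistent with the style of the preceding lemmas.
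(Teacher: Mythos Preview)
Your proposal is correct and takes essentially the same approach as the paper: apply Lemma~\ref{lemma:low_trig_AB} to the product of $A$ and $A^{-1}$ and invoke uniqueness of inverses. If anything you are slightly more careful than the paper, since you explicitly justify that $A^{-1}$ is lower triangular before invoking Lemma~\ref{lemma:low_trig_AB}, a hypothesis the paper uses silently.
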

\begin{proof}
Using Lemma \ref{lemma:low_trig_AB} we obtain
$(P_D \circ (A^{-1})) (P_D \circ A) = P_D \circ \Id = \Id$. Since the quadratic matrices form a group, the inverse is unique. Therefore, equality between $(P_D \circ (A^{-1})) = (P_D \circ (A))^{-1}$ must hold.
\end{proof}

\begin{lemma} \label{lemma:strict_low_trig_AB}
Let $A \in \R^{N \times N}$ be strictly lower triangular and $B \in \R^{N \times N}$ lower triangular. Then their product $C = A B$ is strictly lower triangular.
\end{lemma}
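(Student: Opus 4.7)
The plan is to proceed by a direct entrywise calculation on $C_{ij} = \sum_{k=1}^{N} A_{ik} B_{kj}$ and show that every summand vanishes whenever $i \le j$. This is the defining property of a strictly lower triangular matrix, so establishing it completes the proof.

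First I would fix indices $i,j$ with $i \le j$ and consider an arbitrary $k \in \{1,\dots,N\}$. The key dichotomy is to split on the position of $k$ relative to $i$. If $k \ge i$, then since $A$ is strictly lower triangular we have $A_{ik} = 0$. Otherwise $k < i$, and combining with the assumption $i \le j$ gives $k < j$; but $B$ is lower triangular, so $B_{kj} = 0$. In either case $A_{ik} B_{kj} = 0$, and therefore $C_{ij} = 0$ for all $i \le j$, which means $C$ is strictly lower triangular.

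There is no real obstacle here: the argument is a one-line case analysis on the index $k$. The only place to be careful is to keep the conventions straight (strictly lower triangular: $A_{ik} = 0$ for $i \le k$; lower triangular: $B_{kj} = 0$ for $k < j$), since mixing up the strict and non-strict conditions would be the only way the argument could fail. Once the conventions are fixed, the calculation is immediate and requires no further machinery.
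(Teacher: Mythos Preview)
Your argument is correct: the case split on $k$ relative to $i$ cleanly forces every summand of $C_{ij}$ to vanish when $i \le j$, and your bookkeeping of the strict versus non-strict triangularity conventions is exactly right.

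The paper takes a slightly different, two-step route: it first invokes the standard fact that the product of two lower triangular matrices is lower triangular, and then observes that the diagonal of such a product is the entrywise product of the diagonals, so $C_{ii} = A_{ii}B_{ii} = 0$ because $A_{ii}=0$. (The paper actually writes ``since $B$ has a zero diagonal,'' which is a slip; it is $A$ that has the zero diagonal.) Your approach is more self-contained in that it does not appeal to either of those auxiliary facts, while the paper's version is marginally shorter once those facts are taken for granted. Both are entirely elementary and equally valid.
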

\begin{proof}
$C$ is lower triangular and the diagonal entries are $C_{ii}=A_{ii}B_{ii}=0$
since $B$ has a zero diagonal.
\end{proof}

\begin{corollary} \label{corollary:strict_low_trig_AB}
Let $A \in \R^{N \times N}$ be strictly lower triangular and $D \in \R^{N \times N}$ diagonal. Then their product $C = A D$
is strictly lower triagonal.
\end{corollary}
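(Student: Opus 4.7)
The plan is to reduce this corollary directly to Lemma \ref{lemma:strict_low_trig_AB}. The key observation is that a diagonal matrix $D$ is a (possibly non-strict) lower triangular matrix, since all of its strictly upper entries vanish by definition of diagonality. Consequently, the hypotheses of Lemma \ref{lemma:strict_low_trig_AB} are satisfied with the choice $B := D$, and its conclusion immediately yields that $C = A D$ is strictly lower triangular. This is the cleanest route because it makes the logical dependency on the preceding lemma explicit and avoids any duplication of index manipulations.

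As an alternative (which I would include only if the referee asked for self-containment) one can give a short componentwise argument. Since $D$ is diagonal, $D_{kj} = 0$ unless $k = j$, so the matrix product collapses to $(AD)_{ij} = \sum_{k=1}^{N} A_{ik} D_{kj} = A_{ij} D_{jj}$. Strict lower triangularity of $A$ means $A_{ij} = 0$ whenever $i \leq j$, hence $(AD)_{ij} = 0$ for all $i \leq j$, which is exactly the defining property of a strictly lower triangular matrix.

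There is no genuine obstacle here: the statement is essentially a one-line specialization of Lemma \ref{lemma:strict_low_trig_AB}, recorded as a corollary only because diagonal multiplication arises as a distinguished case elsewhere in the paper (e.g.\ when multiplying by $R_{0;:N,:N}^{-1}$ or by $[E]_E$-style scaling matrices). The right proof to present is therefore the single-sentence reduction; the componentwise computation is strictly optional.
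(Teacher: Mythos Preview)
Your proposal is correct and matches the paper's treatment: the paper states this result as a corollary of Lemma~\ref{lemma:strict_low_trig_AB} with no separate proof, so your one-line reduction (a diagonal matrix is in particular lower triangular, hence the lemma applies with $B := D$) is exactly the intended argument.
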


\begin{lemma} \label{lemma:symmetric_antisymmetric}
Every quadratic matrix $A$ can be written as the sum of a symmetric matrix $S = \frac{1}{2} ( A + A^T)$ and an antisymmetric matrix $X = \frac{1}{2} ( A - A^T)$, i.e.
\begin{eqnarray}
A &=& S + X 
\end{eqnarray}
\end{lemma}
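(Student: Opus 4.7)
The claim is essentially tautological, so my plan is to just directly verify the three required properties of the decomposition $A = S + X$ with the explicit choices $S = \tfrac{1}{2}(A + A^T)$ and $X = \tfrac{1}{2}(A - A^T)$. First I would check that $S^T = S$: this is immediate from $(A + A^T)^T = A^T + A$ together with linearity of transpose. Next I would check that $X^T = -X$ in the same way, using $(A - A^T)^T = A^T - A = -(A - A^T)$. Finally I would add the two expressions and observe that the $A^T$ terms cancel, leaving $S + X = \tfrac{1}{2}(A + A^T) + \tfrac{1}{2}(A - A^T) = A$.

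There is no real obstacle here; the only thing to be slightly careful about is that the statement merely asserts existence of such a decomposition, not its uniqueness, so no further argument is needed. If one wanted to add a brief remark on uniqueness (not required by the statement), it would follow by taking the transpose of a hypothetical alternative decomposition $A = S' + X'$ with $S'$ symmetric and $X'$ antisymmetric, giving $A^T = S' - X'$, and then solving the two linear equations for $S'$ and $X'$ to recover the formulas above. But since the lemma as stated only needs existence, the three-line verification suffices.
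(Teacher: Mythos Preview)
Your proposal is correct and follows essentially the same approach as the paper's proof, which simply verifies $A = \tfrac{1}{2}(A + A^T) + \tfrac{1}{2}(A - A^T)$ by direct computation. Your version is slightly more explicit in checking the symmetry and antisymmetry of $S$ and $X$, and the optional uniqueness remark is a harmless addition not present in the paper.
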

\begin{proof}
$A = \frac{1}{2} ( A + A^T + A - A^T) = \frac{1}{2} ( A + A^T) + \frac{1}{2} ( A - A^T) = S + X$.
\end{proof}

\bibliographystyle{alpha}
\bibliography{refs}

\end{document}